\title{Water transport on graphs}
\author{Olle Häggström \thanks{Research supported by grants
 from the Swedish Research Council and from the Knut and Alice
 Wallenberg Foundation}
\qquad Timo Hirscher
     \thanks{Research supported by grants from the Swedish Research Council and the Royal Swedish Academy of Sciences}
\\\normalsize Chalmers University of Technology}
\theoremstyle{break}
\newtheorem{theorem}{Theorem}[section]
\newtheorem{lemma}{Lemma}[section]
\newtheorem{proposition}{Proposition}[section]
\newtheorem{definition}{Definition}
\newtheorem*{remark}{Remark}
\newtheorem{example}{Example}[section]
\let\c@proposition\c@theorem
\let\c@lemma\c@theorem
\let\c@corollary\c@theorem
\newenvironment{proof}{\noindent{\sc Proof:}}{\vspace{-0.5cm}~\hfill $\square$\vspace{0.5cm}}
\newenvironment{nproof}[1]{\noindent{\sc Proof #1:}}{\vspace{-1em}~\hfill $\square$\vspace{2em}}
\newcommand\N{\mathbb{N}}
\newcommand\R{\mathbb{R}}
\newcommand\Z{\mathbb{Z}}
\newcommand\E{\mathbb{E}\,}
\newcommand\Prob{\mathbb{P}}
\renewcommand\epsilon{\varepsilon}
\renewcommand\phi{\varphi}
\definecolor{darkblue}{rgb}{0,0,.5}
\begin{document}
\newpage
\maketitle
\begin{abstract}
    If the nodes of a graph are considered to be identical barrels -- featuring different water
    levels -- and the edges to be (locked) water-filled pipes in between the barrels, one might
    consider the optimization problem of how much the water level in a fixed barrel can be raised
    with no pumps available, i.e.\ by opening and closing the locks in an elaborate succession.
    This problem originated from the analysis of an opinion formation process and proved to be not
    only sufficiently intricate in order to be of independent interest, but also algorithmically
    complex. We deal with both finite and infinite graphs as well as deterministic and random
    initial water levels and find that the infinite line graph, due to its leanness, behaves much
    more like a finite graph in this respect.
\end{abstract}



\section{Introduction}

Imagine a plane on which rainwater is collected in identical rain barrels, some of which are connected through
pipes (that are already water-filled). All the pipes feature locks that are normally closed. If a lock is opened,
the contents of the two barrels which are connected via this pipe start to level, see Figure \ref{barrels}.
If one waits long enough, the water levels in the two barrels will be exactly the same, namely lie at the average
$\tfrac{a+b}{2}$ of the two water levels ($a$ and $b$) before the pipe was unlocked.

\begin{figure}[ht]
     \centering
     \includegraphics[scale=0.9]{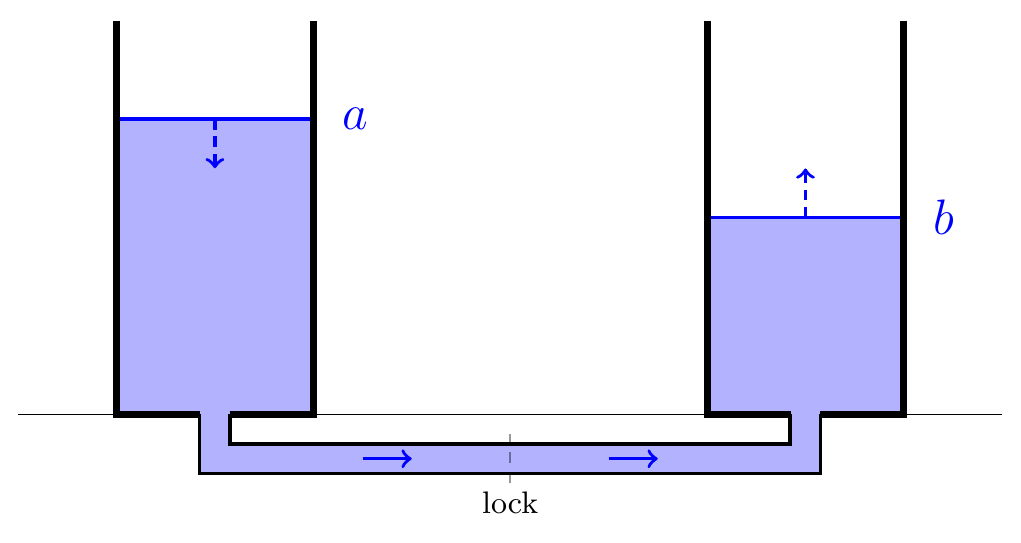}
     \caption{Levelling water stages after just having opened a lock. \label{barrels}}
\end{figure}

After a rainy night in which all of the barrels accumulated a certain amount of precipitation we might be interested
in maximizing the water level in one fixed barrel by opening and closing some of the locks in carefully chosen
order.

\vspace*{1em}
\noindent 
In order to mathematically model the setting, consider an undirected graph $G=(V,E)$, which is either finite or
infinite with bounded maximal degree. Furthermore, we can assume without loss of generality that $G$ is
connected and simple, that means having neither loops nor multiple edges. Every vertex is understood to represent
one of the barrels and the pipes correspond to the edges in the graph. The barrels themselves are considered
to be identical, having a fixed capacity $C>0$.

Given some initial profile $\{\eta_0(u)\}_{u\in V}\in[0,C]^V$, the system is considered to evolve in discrete
time and in each round we can open one of the locked pipes and
transport water from the fuller barrel into the emptier one. If we stop early, the two levels might not have
completely balanced out giving rise to the following update rule for the water profile: If in round $k$ the pipe
$e=\langle x,y\rangle$ connecting the two barrels at sites $x$ and $y$, with levels $\eta_{k-1}(x)=a$ and
$\eta_{k-1}(y)=b$ respectively, is opened and closed after a certain period of time, we get

 \begin{equation}\label{update}
         \begin{array}{rcl}\eta_k(x) &\!=\!& a+\mu_k\,(b-a)\\
                           \eta_k(y) &\!=\!& b+\mu_k\,(a-b)
         \end{array}
 \end{equation}
for some $\mu_k\in[0,\tfrac12]$, which we assume can be chosen freely by appropriate choice of how long the pipe
is left open. All other levels stay unchanged, i.e.\ $\eta_k(w)=\eta_{k-1}(w)$ for all $w\in V\setminus\{x,y\}$.

The quantity of interest is then defined as follows:

\begin{definition}\label{kappa}
	For a graph $G=(V,E)$, an initial water profile $\{\eta_0(u)\}_{u\in V}$ and a fixed vertex $v\in V$
	(the {\em target vertex}), let a {\em move sequence} be given by a list of edges and time spans
	that determines which pipes are opened (in chronological order) and for how long.
	Let then $\kappa(v)$ be defined as the supremum over all water levels that are
	achievable at $v$ with move sequences consisting of finitely many rounds, i.e.\
	$$\kappa(v):=\sup\{r\in\R,\, \text{there exists }T\in\N_0 \text{ and a move sequence s.t.\ }\eta_T(v)=r\}.$$
\end{definition}

Readers familiar with mathematical models for social interaction processes might note that (\ref{update})
basically looks like the update rule in the opinion formation process given
by the so-called {\em Deffuant model} for consensus formation in social networks (as described in the
introduction of \cite{Deffuant}), only $\mu$ can change from update to update and the bounded confidence
restriction is omitted. This however is no coincidence: The situation described in the context above arises
naturally in the analysis of the Deffuant model where the question is how extreme an opinion can a fixed
agent possibly get given an initial opinion profile on a specified network graph, if the interactions take
place appropriately.

In order to tackle this question, Häggström \cite{ShareDrink} invented a non-random pairwise averaging
procedure, which he proposed to call {\em Sharing a drink} (SAD). This procedure -- which is the main focus
of the second section -- was originally considered on the infinite line graph only, i.e.\ the graph $G=(V,E)$
with $V=\Z$ and $E=\{\langle v, v+1\rangle,\;v\in\Z\}$, but can immediately be generalized to any
graph (see Definition \ref{SAD}) and is dual to the water transport described above in a sense to be made
precise in Lemma \ref{dual}.

In Section \ref{finite}, we will deal with the water transport problem on finite graphs. After formally
introducing the idea of optimal move sequences, we investigate both their essential building blocks and
the effect of simultaneously opened pipes. 
In subsection \ref{examples}, being a collection of examples, we will in fact deal with both situations -- the
one in which we consider the initial water levels to be deterministic and the other in which they are random.
In the latter case $\kappa(v)$ obviously becomes a random variable as well since it strongly depends on the
initial profile. On non-transitive graphs (see Definition \ref{quasitrans}) its distribution can moreover
depend on the chosen vertex $v$ -- even for i.i.d.\ initial water levels, see Example \ref{linegraph}.

In the fourth section, we extend the complexity consideration touched upon in some of the examples
from Section \ref{finite}. We show that it is an NP-hard problem to determine $\kappa(v)$ for a
given finite graph, target vertex $v$ and initial water profile in general, something that might be considered
as a valid excuse for the fact that we are unable to give a neat general solution when it comes to
optimal move sequences in the water transport problem on finite graphs, as dealt with in Section \ref{finite}.

As opposed to the two precedent sections, Section \ref{infinite} is devoted to infinite graphs. We consider
i.i.d.\ initial water levels (with a non-degenerate marginal distribution) and detect a remarkable change
of behavior: On the infinite line graph, the highest achievable water level at a fixed vertex depends on
the initial profile in the sense that is has a non-degenerate distribution, just like on any finite graph.
If the infinite graph contains a neighbor-rich half-line (see Definition \ref{hl}), however, this dependence
becomes degenerate: For any vertex $v\in V$, the value $\kappa(v)$ almost surely equals the essential
supremum of the marginal distribution. This fact makes the infinite line graph quite unique:
It constitutes the only exception among all infinite quasi-transitive graphs, to the effect that $\kappa(v)$ is a
non-degenerate random variable -- an observation which is captured in the last theorem: the nonetheless central 
Theorem \ref{qtgraphs}.

\section{Connection to the SAD-procedure}\label{sec2}

Let us first repeat the formal definition of the SAD-procedure:
\begin{definition}\label{SAD}
For a graph $G=(V,E)$ and some fixed vertex $v\in V$, define $\{\xi_0(u)\}_{u\in V}$ by setting
$$\xi_0(u)=\begin{cases}1\quad\text{for } u=v\\ 0 \quad\text{for } u\neq v.\end{cases}$$
In each time step, an edge $\langle x,y\rangle$ is chosen and the profile $\{\xi_0(u)\}_{u\in V}$ updated
according to the rule (\ref{update}) with $\{\xi_k(u)\}_{u\in V}$ in place of $\{\eta_k(u)\}_{u\in V}$.
One can interpret this procedure as a full glass of water initially placed at vertex $v$ (all other glasses
being empty), which is then repeatedly shared among neighboring vertices by each time step choosing a pair
of neighbors and pouring a $\mu_k$-fraction of the difference from the glass containing more water into the one
containing less. Let us refer to this interaction process as {\em Sharing a drink (SAD)}.
\end{definition}

Just as in \cite{ShareDrink}, the SAD-procedure can be used to describe the composition of the contents
in the water barrels after finitely many rounds of opening and closing pipe locks. The following lemma
corresponds to La.\ 3.1 in \cite{ShareDrink}, but since the two dual processes (water transport and SAD) evolve
in discrete time in our setting, the proof simplifies somewhat.

\begin{lemma}\label{dual}
Consider an initial profile of water levels $\{\eta_0(u)\}_{u\in V}$ on a graph $G=(V,E)$ and fix a
vertex $v\in V$. For $T\in\N_0$ define the SAD-procedure that starts with $\xi_0(u)=\delta_v(u)$
(see Definition \ref{SAD}) and is dual to the chosen move sequence in the water transport problem in the 
following sense: If in round $k\in\{1,\dots,T\}$ the water profile is updated according to (\ref{update}), the update in the SAD-profile at
time $T-k\in\{0,\dots,T-1\}$ takes place along the same edge and with the same choice of $\mu_k$. Then we get
\begin{equation}\label{convcomb}\eta_T(v)=\sum_{u\in V}\xi_T(u)\,\eta_0(u).\end{equation}
\end{lemma}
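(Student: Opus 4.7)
The plan is to establish (\ref{convcomb}) via a time-telescoping invariant that encodes the self-adjointness of the pairwise-averaging update. Concretely, for $t\in\{0,1,\dots,T\}$ I would set
\[
\Phi_t := \sum_{u\in V} \xi_t(u)\,\eta_{T-t}(u).
\]
At the two endpoints, $\Phi_0=\eta_T(v)$ (because $\xi_0=\delta_v$) and $\Phi_T=\sum_u \xi_T(u)\,\eta_0(u)$. So the lemma is equivalent to $\Phi_t$ being independent of $t$, which I would prove step by step.

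The single-step check is the core of the argument. By the coupling prescribed in the statement, the SAD step from time $t$ to $t+1$ and the water step from $\eta_{T-t-1}$ to $\eta_{T-t}$ are performed along the same edge $\langle x,y\rangle$ with the same parameter $\mu_k$, where $k=T-t$. Outside $\{x,y\}$ both profiles are frozen, so the only thing to verify is that the two-vertex contribution $\xi_t(x)\,\eta_{T-t}(x)+\xi_t(y)\,\eta_{T-t}(y)$ equals its analogue with $t$ replaced by $t+1$ and $T-t$ replaced by $T-t-1$. This is a short expansion using (\ref{update}): both expressions come out to
\[
\xi_t(x)\,\eta_{T-t-1}(x) + \xi_t(y)\,\eta_{T-t-1}(y) + \mu_k\,\bigl(\xi_t(x)-\xi_t(y)\bigr)\bigl(\eta_{T-t-1}(y)-\eta_{T-t-1}(x)\bigr),
\]
so $\Phi_{t+1}=\Phi_t$, and telescoping yields the claim.

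The only subtle point in the infinite-graph setting is that the sum defining $\Phi_t$ must make sense; this is immediate since $\xi_t$ is built from the point mass $\delta_v$ via finitely many edge updates and therefore always has finite support, so every $\Phi_t$ is a finite sum (no convergence issue arises even when $\eta_0$ is unbounded on $V$). I do not anticipate any real difficulty — the whole argument really amounts to the observation that the averaging operator along a single edge is a symmetric linear map, so its action on $\eta$ can be transferred onto $\xi$ at the cost of reversing the order in which the updates are applied, which is precisely what the coupling in the statement does.
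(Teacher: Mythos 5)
Your proof is correct, and the core computational check (the self-adjointness of the single-edge averaging map with respect to the natural pairing $\sum_u \xi(u)\,\eta(u)$) is exactly what drives the paper's argument as well. The difference is purely organizational: the paper proceeds by induction on $T$, peeling off the first water move, treating $\eta_1$ as a fresh initial profile, and invoking the induction hypothesis on the remaining $T-1$ moves, whereas you package the same step-by-step verification as a telescoping invariant $\Phi_t=\sum_u\xi_t(u)\,\eta_{T-t}(u)$ with $\Phi_0=\eta_T(v)$ and $\Phi_T=\sum_u\xi_T(u)\,\eta_0(u)$. Your formulation is arguably tidier — it isolates the reason the duality works (a symmetric linear operator can be moved from one side of a bilinear form to the other, reversing the order of composition) and makes the finite-support observation that justifies the sums on infinite graphs explicit, a point the paper leaves implicit. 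Both routes rest on the identical two-vertex identity, so there is no real gap either way.
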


\begin{proof}
We prove the statement by induction on $T$. For $T=0$, the statement is trivial and there is nothing to show.
For the induction step fix $T\in\N$ and assume the first pipe opened to be $e=\langle x,y\rangle$. According to 
(\ref{update}) we get
$$\eta_1(u)=\begin{cases}\eta_0(u)& \text{if }u\notin\{x,y\}\\
                         (1-\mu_1)\,\eta_0(x)+\mu_1\,\eta_0(y)&\text{if }u=x\\
                         (1-\mu_1)\,\eta_0(y)+\mu_1\,\eta_0(x)&\text{if }u=y.
                         \end{cases}$$
Let us consider $\{\eta_1(u)\}_{u\in V}$ as some initial profile $\{\eta'_0(u)\}_{u\in V}$. By induction
hypothesis we get
\begin{align*}
\eta'_{T-1}(v)&=\sum_{u\in V}\xi'_{T-1}(u)\,\eta'_0(u)\\
            &=\sum_{u\in V\setminus\{x,y\}}\xi'_{T-1}(u)\,\eta_0(u)
            +\Big((1-\mu_1)\,\xi'_{T-1}(x)+\mu_1\,\xi'_{T-1}(y)\Big)\,\eta_0(x)\\
            &\quad+\Big((1-\mu_1)\,\xi'_{T-1}(y)+\mu_1\,\xi'_{T-1}(x)\Big)\,\eta_0(y),
\end{align*}
where $\eta'_{T-1}(v)=\eta_T(v)$ and $\{\xi'_k(u)\}_{u\in V}$, $0\leq t\leq T-1$, is the SAD-procedure corresponding
to the move sequence after round $1$. As by definition the SAD-procedure $\xi$ arises from $\xi'$ by
adding an update at time $T-1$ along edge $e$ with parameter $\mu_1$, we find $\xi_k(u)=\xi'_k(u)$ for all 
$k\in\{0,\dots,T-1\}$ and $u\in V$ as well as
$$\xi_T(u)=\begin{cases}\xi_{T-1}(u)=\xi'_{T-1}(u)& \text{if }u\notin\{x,y\}\\
                         (1-\mu_1)\,\xi_{T-1}(x)+\mu_1\,\xi_{T-1}(y)&\text{if }u=x\\
                         (1-\mu_1)\,\xi_{T-1}(y)+\mu_1\,\xi_{T-1}(x)&\text{if }u=y,
                         \end{cases}$$
which establishes the claim.
\end{proof}

\vspace*{1em}
In the following sections, we want to consider not only deterministic but also random initial profiles of
water levels. Having this mindset already, it might be useful to halt for a moment and realize that the statement
of Lemma \ref{dual} deals with a deterministic duality that does not involve any randomness (once the initial
profile and the move sequence are fixed).

Before we turn to the task of rising water levels, let us prepare two more auxiliary results. The first one
follows directly from the energy argument that was used in the proof of Thm.\ 2.3 in \cite{ShareDrink}:
\begin{lemma}\label{evenout}
Given an initial profile of water levels $\{\eta_0(u)\}_{u\in V}$ on a graph $G=(V,E)$, fix a finite set $A\subseteq V$
and a set $E_A\subseteq E$ of edges inside $A$ that connects $A$.
If we open the pipes in $E_A$ -- and no others -- in repetitive sweeps for times long enough
such that $\mu_k\geq\epsilon$ for some fixed $\epsilon>0$ in each round (cf.\ (\ref{update})),
then the water levels inside the set $A$ approach a balanced average, i.e.\ 
converge to the value $\tfrac{1}{|A|} \sum_{v\in A}\eta_0(v)$. The corresponding dual SAD-profiles started with
$\xi_0(u)=\delta_v(u),\ u\in V,$ converge uniformly to $\tfrac{1}{|A|}\,\delta_A$ for all $v\in A$.
\end{lemma}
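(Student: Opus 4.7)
The plan is to follow an energy argument of the kind used in Thm.~2.3 of \cite{ShareDrink}, adapted to our discrete-time setting. Let $\bar\eta := \tfrac{1}{|A|}\sum_{u\in A}\eta_0(u)$ and define
\[
  \En_k \;:=\; \sum_{u\in A}\bigl(\eta_k(u)-\bar\eta\bigr)^{2}.
\]
Since every opened pipe lies inside $A$, the total amount of water inside $A$ is conserved, so $\bar\eta$ is also the average of $\eta_k|_A$ for every $k\geq 0$. A direct computation from (\ref{update}) shows that opening $e=\langle x_k,y_k\rangle$ with parameter $\mu_k$ changes the energy by precisely
\[
  \En_{k-1}-\En_k \;=\; 2\mu_k(1-\mu_k)\,\bigl(\eta_{k-1}(x_k)-\eta_{k-1}(y_k)\bigr)^{2},
\]
so $\En_k$ is non-increasing. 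Using $\mu_k\in[\epsilon,\tfrac12]$ we obtain $2\mu_k(1-\mu_k)\geq\epsilon$, and summing over $k$ gives
\[
  \sum_{k\geq 1}\bigl(\eta_{k-1}(x_k)-\eta_{k-1}(y_k)\bigr)^{2}\;\leq\;\En_0/\epsilon\;<\;\infty,
\]
hence the difference across each opened edge tends to $0$. The update rule then yields $|\eta_k(u)-\eta_{k-1}(u)|\leq|\eta_{k-1}(x_k)-\eta_{k-1}(y_k)|$ for every $u$, so the sequence $\{\eta_k|_A\}_k$ is ``slowly varying''.

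To pass from this to $\eta_k(u)\to\bar\eta$ for every $u\in A$, I would argue by contradiction. By compactness (all levels remain in $[0,C]$) one may pick a subsequence $\eta_{k_n}|_A\to\eta^*$ with $\eta^*\neq\bar\eta$ on $A$, and conservation forces $\eta^*$ to have average $\bar\eta$, so it is non-constant on $A$. Connectedness of $E_A$ then produces an edge $e=\langle x,y\rangle\in E_A$ with $\eta^*(x)\neq\eta^*(y)$. Each sweep uses every edge of $E_A$, so $e$ is reopened within a uniformly bounded number $N$ of steps past time $k_n$; combining this with the slow-variation property shows that at the next opening time $k'_n$ the levels at $x$ and $y$ still satisfy $\eta_{k'_n-1}(x)\to\eta^*(x)$ and $\eta_{k'_n-1}(y)\to\eta^*(y)$, contradicting the summability established above. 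Hence $\eta_k|_A\to\bar\eta$.

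The SAD statement follows by applying exactly the same argument to $\{\xi_k\}$, which obeys (\ref{update}) with the same edges and parameters $\mu_k$. For $v\in A$ the initial profile $\xi_0=\delta_v$ has total mass $1$ sitting inside $A$; since no pipe outside $E_A$ is ever opened, $\xi_k$ vanishes identically on $V\setminus A$, so the convergence on $A$ gives $\xi_k(u)\to 1/|A|$ for every $u\in A$. Uniformity is automatic because $A$ is finite. The main obstacle is the subsequence/slow-variation step of the second paragraph: one needs ``repetitive sweeps'' to imply a uniform bound on the length of individual sweeps (taking each sweep to be an enumeration of $E_A$ gives $N\leq|E_A|$), which is what licenses the approximation $\eta_{k'_n-1}(x)\approx\eta^*(x)$ used to reach the contradiction.
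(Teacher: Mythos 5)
Your proof is correct and follows the same energy-decrease argument the paper uses, computing the exact energy drop $2\mu_k(1-\mu_k)\bigl(\eta_{k-1}(x_k)-\eta_{k-1}(y_k)\bigr)^2$ (the paper's printed $2\mu_k^2(b-a)^2$ is the corresponding lower bound valid for $\mu_k\leq\tfrac12$). You are in fact somewhat more careful than the paper's terse presentation: the subsequence/slow-variation contradiction argument you spell out supplies the justification for the step the paper compresses into ``since any pipe in $E_A$ is opened repetitively we must have $|\eta_k(u)-\eta_k(v)|\to 0$''.
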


\begin{proof}
 Let us define the energy after round $k$ inside $A$ by
 $$W_k(A)=\sum_{v\in A}\big(\eta_k(v)\big)^2.$$
 A short calculation reveals that an update of the form (\ref{update}) reduces the energy by
 $2\mu_k^{\,2}\,(b-a)^2$, where the updated water levels were $a$ and $b$ respectively. If $\mu_k$ is bounded away from $0$,
 the fact that $W_k(A)\geq0$ for all $k$ entails that the difference in water levels $|b-a|$ before a pipe is opened
 can be larger than any fixed positive value only finitely many times. In effect, since any pipe in $E_A$ is opened
 repetitively we must have $|\eta_k(u)-\eta_k(v)|\to 0$ as $k\to\infty$ for all edges $\langle u,v\rangle\in E_A$.
 As the updates are average preserving, the first part of the claim follows from the fact that $E_A$ connects $A$.
 
 The second part of the lemma follows by applying the same argument to the dual SAD-procedure.
 \end{proof}
 \vspace*{1em}

The following lemma constitutes an extremely narrowed variant of Thm.\ 2.3 in \cite{ShareDrink} which applies to
graphs more complex than line graphs as well and will come in useful in Example \ref{complete}:
\begin{lemma}\label{SADmax}
Fix a (connected) graph $G=(V,E)$ and a vertex $v\in V$. For any $w\in V\setminus{\{v\}}$,
the supremum of $\xi_k(w)$ taken over all times $k$ and SAD-procedures started with 
$\xi_0(u)=\delta_v(u)$, $u\in V$, is less than or equal to $\tfrac{1}{2}$.
\end{lemma}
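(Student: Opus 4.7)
The plan is to prove the lemma by a straightforward induction on the number of SAD-updates, strengthened slightly by exploiting the conservation of total mass. Specifically, I will show by induction on $k$ that for every SAD-procedure with $\xi_0(u)=\delta_v(u)$ and for every vertex $u\neq v$, one has $\xi_k(u)\le \tfrac12$; the supremum claim then follows immediately.

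The base case $k=0$ is trivial since $\xi_0(u)=0$ for $u\neq v$. For the inductive step, suppose the statement holds at time $k$ and that the $(k{+}1)$-th update occurs along edge $\langle x,y\rangle$ with parameter $\mu\in[0,\tfrac12]$. If $v\notin\{x,y\}$, then by the induction hypothesis both $\xi_k(x),\xi_k(y)\le\tfrac12$, so their convex combinations $\xi_{k+1}(x)$ and $\xi_{k+1}(y)$ are also at most $\tfrac12$; all other coordinates are unchanged. The only delicate case is when $v$ is an endpoint of the updated edge, say $v=x$, and we must bound $\xi_{k+1}(y)=(1-\mu)\xi_k(y)+\mu\,\xi_k(v)$ — here the induction hypothesis tells us nothing about $\xi_k(v)$ itself (which can be as large as $1$).

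This is the one place where the argument needs an extra ingredient, but it is provided for free by the update rule (\ref{update}), which is both mean-preserving and preserves non-negativity. Hence $\sum_{u\in V}\xi_k(u)=1$ and $\xi_k\ge 0$ for every $k$, and in particular $\xi_k(v)\le 1-\xi_k(y)$. Plugging this into the expression for $\xi_{k+1}(y)$ yields
\[
\xi_{k+1}(y)\;\le\;(1-\mu)\,\xi_k(y)+\mu\bigl(1-\xi_k(y)\bigr)\;=\;\mu+(1-2\mu)\,\xi_k(y),
\]
and since $1-2\mu\ge 0$ and $\xi_k(y)\le \tfrac12$ by the induction hypothesis, the right-hand side is at most $\mu+(1-2\mu)\cdot\tfrac12=\tfrac12$. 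This closes the induction.

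The main (and really only) obstacle is conceptual rather than technical: one must resist trying to prove the stronger but false claim that $v$ always realizes the maximum of $\xi_k$ (an easy example on a star shows this can fail once $v$ has given away mass twice), and instead notice that the global mass constraint $\xi_k(v)+\xi_k(y)\le 1$, together with the a priori bound $\mu\le \tfrac12$, is exactly the slack needed in the single non-trivial induction case. I would note in passing that the same argument can alternatively be run on the dual water-transport problem via Lemma \ref{dual}, starting from the initial profile $\eta_0=\delta_w$ and arguing that $\eta_T(v)\le \tfrac12$, but the direct SAD formulation above seems cleaner.
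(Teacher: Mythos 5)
Your proof is correct. Both you and the paper argue by induction on the number of updates (the paper phrases it as a first-bad-time contradiction, which is equivalent), and both hinge on conservation of total mass together with the constraint $\mu\le\tfrac12$. The difference is in how the single non-trivial case is closed. The paper fixes $w$ and supposes $k$ is the first time $\xi_k(w)>\tfrac12$; since the level at $w$ just increased, its partner $u$ had more water before the update, and because an update with $\mu\le\tfrac12$ preserves the order between the two endpoints, $\xi_k(u)\ge\xi_k(w)>\tfrac12$, forcing $\xi_k(w)+\xi_k(u)>1$ and giving the contradiction directly, with no case split on whether $v$ is involved. You instead maintain the invariant ``$\xi_k(u)\le\tfrac12$ for all $u\ne v$'', split on whether $v$ is an endpoint, and in the delicate case use the algebraic bound $\xi_{k+1}(y)\le\mu+(1-2\mu)\xi_k(y)\le\tfrac12$ via $\xi_k(v)\le 1-\xi_k(y)$. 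The paper's order-preservation observation makes the argument a bit slicker and case-free; your version is more mechanical but equally valid and arguably spells out more explicitly where $\mu\le\tfrac12$ enters.
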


\begin{proof}
	If the SAD-procedure is started with a full glass of water at $v\neq w$, the assumption that the amount
	at $w$ can rise above $\tfrac12$ leads to the following contradiction: Assume $k$ to be the first time s.t.\ 
	$\xi_k(w)>\tfrac12$. Then in round $k$ node $w$ necessarily pooled the water with some neighbor $u$, that had more
	water than $w$. But since this relation is preserved by an update, it implies
	$$\xi_k(w)+\xi_k(u)\geq2\,\xi_k(w)>1,$$
	which is impossible as the amount of water shared always sums to 1.
\end{proof}\vspace*{1em}
 
 To round off these preliminary considerations, let us collect some results about SAD-profiles from $\cite{ShareDrink}$
 -- partly already mentioned -- into a single lemma for convenience.
 
 \begin{lemma}\label{collection}
  Consider the SAD-procedure on a line graph, started in vertex $v$, i.e.\ with $\xi_0(u)=\delta_v(u),\ u\in V$.
  \begin{enumerate}[(a)]
   \item The SAD-profiles achievable on line graphs are all unimodal.
   \item If the vertex $v$ only shares the water to one side, it will remain a mode of the SAD-profile.
   \item The supremum over all achievable SAD-profiles started with $\delta_v$ at another vertex $w$ equals $\tfrac{1}{d+1}$,
         where $d$ is the graph distance between $v$ and $w$.
 \end{enumerate}
 \end{lemma}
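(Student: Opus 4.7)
For (a), proceed by induction on the number of SAD-updates. The base case $k=0$ is trivial since $\delta_v$ is unimodal with mode at $v$. For the inductive step, assume $\xi_{k-1}$ is unimodal with mode $m$, and suppose the $k$-th update is along $\langle x,x+1\rangle$ with parameter $\mu_k\in[0,\tfrac12]$. The identity
\[
\xi_k(x)-\xi_k(x+1)=(1-2\mu_k)\bigl(\xi_{k-1}(x)-\xi_{k-1}(x+1)\bigr)
\]
shows that the sign of the edge-difference is preserved (and merely shrunk). A case analysis on the position of $\{x,x+1\}$ relative to $m$ --- both strictly on one side of $m$, or the edge straddling $m$ --- verifies that $\xi_k$ is again unimodal, with the mode possibly shifting by at most one vertex in the straddling case.

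For (b), I would argue inductively that, whenever $v$ only shares to (say) the right side, the vertices $v-1,v-2,\ldots$ retain mass $0$ at all times (initially they are all $0$, and since $\langle v-1,v\rangle$ is never opened no mass can leak into the left tail). On the right tail the sign-preservation from (a), together with the initial monotonicity $\xi_0(v)=1\geq\xi_0(v+1)=0\geq\cdots$, implies that $\xi_k(v)\geq\xi_k(v+1)\geq\xi_k(v+2)\geq\cdots$ for every $k$, so $v$ is a mode throughout.

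The lower bound in (c) is immediate from Lemma \ref{evenout} applied to $A=\{v,v+1,\ldots,w\}$ and its $d$ internal edges: the dual SAD-profiles converge uniformly to $\tfrac{1}{d+1}\delta_A$, whence $\xi_k(w)\to\tfrac{1}{d+1}$ and $\sup_k\xi_k(w)\geq\tfrac{1}{d+1}$.

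For the upper bound I would invoke Lemma \ref{dual}: $\sup_k\xi_k(w)$ equals $\sup_k\eta_k(v)$, where $\eta$ denotes the dual SAD (= water-transport) process started from $\delta_w$. By (a) every $\eta_k$ is unimodal. If all updates lie on edges inside $[v,w]$, then in this dual process the starting vertex $w$ shares water only leftward, so (b) (applied to $\eta$ with $w$ in the role of the starting vertex) gives that $w$ remains a mode. Unimodality then yields $\eta_k(v)\leq\eta_k(v+1)\leq\cdots\leq\eta_k(w)$, and because mass is confined to $[v,w]$ we conclude
\[
(d+1)\,\eta_k(v)\leq\sum_{i=v}^{w}\eta_k(i)\leq 1,
\]
i.e.\ $\eta_k(v)\leq\tfrac{1}{d+1}$. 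The main obstacle is reducing the unrestricted case --- where updates on $\langle v-1,v\rangle$, $\langle w,w+1\rangle$ and beyond are permitted --- to this restricted one. Intuitively exterior updates only leak mass out of $[v,w]$ and can therefore only hurt the achievable $\eta_k(v)$; to make this precise I would either set up a swap-and-delete procedure that rewrites any sequence into one confined to $[v,w]$ without decreasing $\eta_k(v)$, or exhibit a potential $\Phi(\eta)=\sum_i f(i)\,\eta_k(i)$ with $f$ supported on $[v,w]$ whose dynamics are controllable under both interior and exterior updates. This reduction is the technical heart of the upper bound.
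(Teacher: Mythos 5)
First, note the paper does not actually prove this lemma itself: it cites La.\ 2.1, La.\ 2.2 and Thm.\ 2.3 of \cite{ShareDrink} (for the infinite line graph) and remarks that the arguments transfer to finite line graphs. So the comparison has to be with the argument you reconstruct, on its own merits.

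Parts (a) and (b) are essentially fine. The sign-preservation identity is the right engine for (a), although the straddling case --- the updated edge contains or abuts the mode --- deserves a few more lines: you must check that unimodality is not violated at the neighboring edges $\langle x-1,x\rangle$ and $\langle x+1,x+2\rangle$, not just at $\langle x,x+1\rangle$ itself, and that the mode moves by at most one vertex. These checks do go through (the lower of the two updated values can only increase, the higher can only decrease, and the outer neighbors are untouched), but they are not automatic from the one-line identity. For (b) the argument is correct: the left tail stays zero, and the right side remains monotone non-increasing because each update preserves the sign of its own edge-difference and pushes the two endpoints towards each other, which cannot flip the sign of the adjacent differences either.

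Part (c) is where the genuine gap lies, and you yourself flag it. Two remarks. First, the ``duality'' step is not doing real work here: the averaging operator is symmetric and the line graph is reflection-symmetric, so ``$\xi$ started at $\delta_v$ evaluated at $w$'' and ``$\eta$ started at $\delta_w$ evaluated at $v$'' are the same quantity up to relabeling $v\leftrightarrow w$. You have not transformed the problem into one you already know how to bound; you have merely renamed the endpoints, and the issue of who ``shares only to one side'' is unchanged. Second, and more seriously, the reduction you call ``the technical heart'' --- that updates on edges outside $[v,w]$ cannot improve $\eta_k(v)$ --- is exactly the nontrivial content of Thm.\ 2.3 of \cite{ShareDrink}. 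Unimodality from (a) is necessary but not sufficient: knowing the profile is unimodal with mode at some $m$ does not bound $\eta_k(v)$ by $\tfrac{1}{d+1}$ unless you also control the total mass inside $[v,w]$ and the location of $m$. Neither of your two sketched routes (swap-and-delete, a potential $\Phi$) is carried out, and the potential suggestion in particular is too vague to assess. As written, the upper bound in (c) is not established.
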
	
 
  The results in \cite{ShareDrink} actually all deal with the infinite line graph, but it is evident how the arguments
  used immediately transfer to finite line graphs. Part (a) hereby corresponds to La.\ 2.2 in \cite{ShareDrink}, part
  (b) to La.\ 2.1 and part (c) to Thm.\ 2.3.
  The argument Häggström \cite{ShareDrink} used to prove the statement in (c) for the infinite line graph can in fact
  be generalized to prove the result for trees without much effort, as was done by Shang (see Prop.\ 6 in \cite{Shang}). 
  
In fact, we believe that not only the cut back statement from Lemma \ref{SADmax} but also the natural generalization
of Thm.\ 2.3 in \cite{ShareDrink} holds true for general graphs. Our attempts to prove the generalization to non-tree
graphs have, however, turned out unsuccessful.

\section{Water transport on finite graphs}\label{finite}

In this section, we consider the underlying network to be finite, i.e.\ $|V|=n\in\N$.
In order to increase the water level at our fixed site $v$ one could in principle start by
greedily trying to connect the barrels with the highest water levels to the one at $v$.
However, optimizing this idea is far from being trivial. Let us first define optimal move sequences and
then reveal some properties and building blocks that they share.

\begin{definition}\label{opt}
For fixed $v\in V$ and a given initial water profile $\{\eta_0(u)\}_{u\in V}$ let $\phi\in (E\times[0,\tfrac12])^T$,
where $\phi_k=(e_k,\mu_k)$, be called a {\em finite move sequence} if $T\in\N_0$. $\phi$ is a {\em finite optimal
move sequence} if opening the pipes $e_1,\dots,e_T$ in chronological order, each for the period of time that corresponds
to $\mu_k$ in (\ref{update}), will lead to the final value $\eta_T(v)=\kappa(v)$.

For any move sequence $\phi\in (E\times[0,\tfrac12])^T$, we will denote by $\{\xi_T(u)\}_{u\in V}$ the SAD-profile that
corresponds to $\phi$ via the duality laid down in Lemma \ref{dual}.

If no finite optimal move sequence exists, let us call $\Phi=\{\phi^{(m)},\;m\in\N\}$ an {\em  infinite type
optimal move sequence}, provided that $\phi^{(m)}\in (E\times[0,\tfrac12])^{T_m}$ is a finite move sequence for each $m\in\N$,
achieving $\eta_{T_m}(v)> \kappa(v)-\tfrac1m$ and the SAD-profiles $\{\xi_{T_m}(u)\}_{u\in V}$ dual to $\phi^{(m)}$
converge pointwise to a limit $\{\xi(u)\}_{u\in V}$ as $m\to\infty$.
\end{definition}

It is tempting to assume that in the case where no finite optimal move sequence exists, we could get away with an
infinite move sequence instead of a sequence of finite move sequences $\Phi$ as described above. However this is not the
case, see Example \ref{seqofseq}.

\begin{lemma}\label{simplif}
	Take the network $G=(V,E)$ to be finite, and fix the target vertex $v$ as well as the initial water profile.
	Then the existence of an optimal move sequence is guaranteed and the following simplification will not
	change its performance:
	In an optimal move sequence, without loss of generality we can assume $\mu_k=\tfrac12$ for all $k$.
\end{lemma}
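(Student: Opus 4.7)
The lemma packages two claims: existence of an optimal move sequence in the sense of Definition \ref{opt}, and the simplification to $\mu_k \equiv \tfrac{1}{2}$. I would prove the simplification first, since it puts approximating sequences into a convenient standard form that the existence argument can then exploit.

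The key observation is that for a fixed edge sequence $(e_1,\ldots,e_T)$, the quantity $\eta_T(v)$ is a multilinear polynomial in $(\mu_1,\ldots,\mu_T)$. Indeed, for $e_k = \langle x,y\rangle$ the update rule (\ref{update}) takes the form $\eta_k = \bigl(I + \mu_k A_{e_k}\bigr)\eta_{k-1}$, where $A_{e_k}$ is a fixed linear operator supported on the coordinates $x,y$ and independent of the $\mu_j$'s. Composing these $T$ operators and expanding the product, every entry of $\eta_T$, and in particular $\eta_T(v)$, is a sum of terms of the form $c_S \prod_{k \in S}\mu_k$ indexed by subsets $S \subseteq \{1,\ldots,T\}$, hence is affine in each $\mu_k$ separately. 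A multilinear function on a rectangular box attains its maximum at a vertex (iteratively freeze all but one coordinate and optimize in that coordinate, which is affine), so there exists $(\mu_k^*) \in \{0,\tfrac{1}{2}\}^T$ achieving at least the same value. Deleting the rounds with $\mu_k^* = 0$, which leave the profile unchanged, yields an equivalent move sequence with $\mu_k \equiv \tfrac{1}{2}$ that performs at least as well.

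For existence, I would pick finite move sequences $\phi^{(m)}$ with $\eta_{T_m}(v) > \kappa(v) - \tfrac{1}{m}$, which exist by definition of the supremum, and by the simplification just proved we may assume $\mu_k = \tfrac{1}{2}$ throughout. Their dual SAD-profiles $\{\xi_{T_m}(u)\}_{u\in V}$ lie in the compact set $[0,1]^V$ since $V$ is finite, so a subsequence converges pointwise to some limit $\{\xi(u)\}_{u\in V}$. Passing to the limit in the finite-sum identity of Lemma \ref{dual} gives $\sum_{u\in V} \xi(u)\eta_0(u) = \kappa(v)$. If some $\phi^{(m)}$ already attains $\kappa(v)$ it is a finite optimal move sequence; otherwise the convergent subsequence is, by definition, an infinite-type optimal move sequence.

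The only real work is verifying the multilinearity cleanly: no $\mu_k^2$ term ever appears because each update is affine in its own $\mu_k$ and all subsequent updates act linearly on the profile. Once that is laid down, both the simplification and the existence dichotomy are essentially bookkeeping; the only mild subtlety is matching the existence conclusion precisely against the two cases of Definition \ref{opt}.
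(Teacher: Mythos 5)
Your proof is correct, and the simplification step is packaged more cleanly than the paper's. The paper also exploits that $\eta_T(v)$ is affine in each $\mu_k$, but it expresses the relevant sign condition through the SAD duality of Lemma~\ref{dual}: after factoring out the first move, the derivative of $\eta_T(v)$ with respect to $\mu_1$ has a sign determined by comparing $\xi'_{T-1}(x)$ with $\xi'_{T-1}(y)$, and one iterates by treating each successive move as ``the first applied to the intermediate profile.'' Your multilinearity observation ($\eta_T = \prod_k (I+\mu_k A_{e_k})\eta_0$, hence affine in each $\mu_k$ separately, hence maximized at a vertex of the box $[0,\tfrac12]^T$) reaches the same conclusion without invoking the duality at all, which is tidier; the only thing you trade away is the running commentary about \emph{which} endpoint to pick, which the paper's argument supplies for free. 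The existence part is essentially the paper's: it works in the compact closure $\overline{A}$ of achievable SAD-profiles and observes that the linear functional $\xi\mapsto\sum_u\xi(u)\eta_0(u)$ attains its maximum there. One small imprecision in your write-up: the dichotomy ``some $\phi^{(m)}$ attains $\kappa(v)$ / otherwise the subsequence is an infinite-type optimal move sequence'' is not exhaustive, since a finite optimal move sequence could exist without being among your chosen $\phi^{(m)}$, and Definition~\ref{opt} only labels $\Phi$ an infinite-type optimal move sequence \emph{when no finite one exists}. The clean statement is: either a finite optimal move sequence exists (take it), or none does, and then your extracted subsequence is by definition an infinite-type optimal move sequence. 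This is exactly how the paper phrases it via $F\cap A$ versus $F\cap A=\emptyset$.
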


\begin{proof}
By the very definition of $\kappa(v)$, the existence of optimal move sequences (however not necessarily finite ones)
is guaranteed: Let $A\subseteq[0,1]^V$ denote the set of achievable SAD-profiles. Its closure $\overline{A}$ in 
$([0,1]^V, \lVert\,.\,\rVert_2)$ is bounded and therefore compact. Given the initial water profile
$\{\eta_0(u)\}_{u\in V}$, the function
$$f:=\begin{cases}[0,1]^V\to[0,C]\\
                  \{\xi(u)\}_{u\in V}\mapsto \sum_{u\in V}\limits\xi(u)\,\eta_0(u)\end{cases}$$
is continuous. Hence there exists a closed subset $F$ of $\overline{A}$ on which $f$ achieves its maximum $\kappa(v)$
over $\overline{A}$. The SAD-profiles dual to finite optimal move sequences are given by $F\cap A$. If $F\cap A=\emptyset$
and $\Phi=\{\phi^{(m)},\;m\in\N\}$ is a collection of finite move sequences s.t.\ $\phi^{(m)}\in (E\times[0,\tfrac12])^{T_m}$ and
$\eta_{T_m}(v)> \kappa(v)-\tfrac1m$ for all $m\in\N$, we can assume without loss of generality that
the corresponding SAD-profiles $\{\xi_{T_m}(u)\}_{u\in V}$ have a limit $\{\xi(u)\}_{u\in V}$ (by passing on to
a subsequence if necessary) as $\overline{A}$ is compact. This turns $\Phi$ into an infinite type optimal move sequence and
the limit of its dual SAD-profiles necessarily lies in $F$.
                  
Assume now that the first move in a sequence $\phi\in (E\times[0,\tfrac12])^T$ is to open the lock on pipe
$e_1=\langle x,y\rangle$ for a time corresponding to $\mu_1\in[0,\tfrac12]$ in (\ref{update}).
       
Without loss of generality we can assume $\eta_0(x)\geq\eta_0(y)$ (which in turn implies
$\eta_1(x)\geq\eta_1(y)$). If we look at the SAD-profile $\{\xi'_{T-1}(u)\}_{u\in V}$ corresponding to
$\phi':=(\phi_2,\dots,\phi_T)\in (E\times[0,\tfrac12])^{T-1}$ -- in effect we look at the outcome of the move sequence
after the first step applied to the new initial water profile $\{\eta_1(u)\}_{u\in V}$ -- we can distinguish two cases:
either $\xi'_{T-1}(x)\geq\xi'_{T-1}(y)$ or $\xi'_{T-1}(x)<\xi'_{T-1}(y)$.
In the first case changing $\mu_1$ to $0$, i.e.\ erasing the first move will not decrease the water level
finally achieved at $v$, see (\ref{convcomb}). In the second case the same holds for changing $\mu_1$ to
$\tfrac12$. Since we can consider any step in the move sequence to be the first one applied to the intermediate
water profile achieved so far, this establishes the claim for finite optimal move sequences.

As any finite move sequence can be simplified in this way without worsening its outcome, the argument applies to the elements
of a sequence $\Phi=\{\phi^{(m)},\;m\in\N\}$ of finite move sequences and thus to infinite type optimal move sequences
as well.
\end{proof}

\subsection{Macro moves}\label{macro}

When it comes to the opening and closing of pipes, it is not self-evident how far things change if we allow
pipes to be opened simultaneously. First of all one has to properly extend the model laid down in (\ref{update})
by specifying how the water levels behave when more than two barrels are connected at the same time. In order to
keep things simple, let us assume that the pipes are all short enough and of sufficient diameter such that we can
neglect all kinds of flow effects. Moreover, let us take the dynamics to be as crude as can be by assuming that
the water levels of the involved barrels approach their common average in a linear and proportional fashion, which
is made more precise in the following definition.

\begin{definition}\label{macro_def}
Given a graph $G=(V,E)$, let $A\subseteq V$ be a set of at least 3 nodes and $E_A\subseteq E$ a set of edges inside
$A$ that connects $A$. A {\em macro move} on $E_A$ (or simply $A$) will denote the action of opening all pipes that
correspond to edges in $E_A$ in some round $k$ simultaneously and will -- analogously to (\ref{update}) -- change
the water levels for all vertices $u\in A$ to
$$\eta_k(u)=(1-2\mu_k)\,\eta_{k-1}(u)+2\mu_k\, \overline{\eta}_{k-1}(A),\quad \text{where }
\overline{\eta}_{k-1}(A)=\frac{1}{|A|}\,\sum_{w\in A}\eta_{k-1}(w)$$
is the average over the set $A$ after round $k-1$ and $\mu_k\in[0,\tfrac12]$.
\end{definition}

First of all, Lemma \ref{dual} transfers immediately and almost verbatim to move sequences including macro moves:
In a move sequence with a macro move on the set $A$ in the first round, we get the water levels
$$\eta_1(u)=\begin{cases}\eta_0(u)& \text{if }u\notin A\\
(1-2\mu_1)\,\eta_0(u)+2\mu_1\,\overline{\eta}_0(A)&\text{if }u\in A.
\end{cases}$$
If $\{\xi_{T-1}(u),\;u\in V\}$ and $\{\xi_{T}(u),\;u\in V\}$ are such that
$$\eta_T(v)=\sum_{u\in V}\xi_T(u)\,\eta_0(u)=\sum_{u\in V}\xi_{T-1}(u)\,\eta_1(u),$$
we find by comparing the coefficient of $\eta_0(u)$
$$\xi_T(u)=\begin{cases}\xi_{T-1}(u)& \text{if }u\notin A\\
(1-2\mu_1)\,\xi_{T-1}(u)+\sum_{w\in A} 2\mu_1\,\frac{\xi_{T-1}(w)}{|A|}&\text{if }u\in A,
\end{cases}$$
which is the SAD-profile originating from the very same macro move applied to $\{\xi_{T-1}(u),\;u\in V\}$.
With this tool in hand, we can prove the following extension of Lemma \ref{simplif}:

\begin{lemma}\label{simplif2}
Take the network $G=(V,E)$ to be finite, and fix the target vertex $v$ as well as the initial water profile.
\begin{enumerate}[(a)]
 \item Even if we allow macro moves, the statement of Lemma \ref{simplif} still holds true, i.e.\ reducing the range
       of $\mu_k$ from $[0,\tfrac12]$ to $\{0,\tfrac12\}$ in each round $k$ does not worsen the outcome of optimal
       move sequences.
 \item The sharp upper bounds on achievable water levels are not changed if we allow for pipes to be opened 
       simultaneously. In other words, the supremum $\kappa(v)$ of water levels achievable at a vertex $v$, as
       characterized in Definition \ref{kappa}, stays unchanged if we allow move sequences to include macro moves.
 \end{enumerate}
\end{lemma}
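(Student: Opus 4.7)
The plan is to extend the argument of Lemma \ref{simplif} so that it accommodates macro moves. For part (a), the duality extension described right before the statement shows that Lemma \ref{dual} remains valid in the presence of macro moves, so that for any finite move sequence $\phi$ whose first round is a (pairwise or macro) move $\phi_1$ with free parameter $\mu_1$, the final water level admits the representation $\eta_T(v) = \sum_{u\in V} \xi'_{T-1}(u)\,\eta_1(u)$, where $\{\xi'_k\}$ is the dual SAD-profile for the tail $(\phi_2,\dots,\phi_T)$. Since $\eta_1(u)$ is affine in $\mu_1$ in either case, so is $\eta_T(v)$, and its supremum over $\mu_1\in[0,\tfrac12]$ is attained at one of the endpoints. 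Replacing $\mu_1$ by whichever of $0$ or $\tfrac12$ yields the larger value preserves optimality. Iterating this modification round by round, and passing the argument to each $\phi^{(m)}$ of an infinite type optimal move sequence exactly as in Lemma \ref{simplif}, establishes (a).

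For part (b), let $\widetilde\kappa(v)$ denote the supremum achievable when macro moves are permitted. Clearly $\widetilde\kappa(v)\geq\kappa(v)$, so only $\widetilde\kappa(v)\leq\kappa(v)$ requires proof. I would fix $\epsilon>0$ and a finite move sequence (possibly containing macro moves) whose outcome at $v$ exceeds $\widetilde\kappa(v)-\epsilon$. By part (a) we may assume that every $\mu_k\in\{0,\tfrac12\}$; discarding the trivial $\mu_k=0$ rounds, each remaining macro move in the sequence amounts to the complete balancing of some set $A$ along a connecting edge set $E_A$. Lemma \ref{evenout} asserts that such a complete balancing can be approximated arbitrarily well in finitely many repetitive sweeps over $E_A$ using only pairwise moves. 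Substituting each macro move by a sufficiently long pairwise sweep then produces a finite pairwise-only move sequence, which will be our candidate witness for $\kappa(v) \geq \widetilde\kappa(v)-2\epsilon$.

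The main obstacle is controlling the propagation of approximation error across successive rounds: an approximating sweep only brings the profile on $A$ close to its mean, and this discrepancy is then acted on by all subsequent moves. However, every update of the form (\ref{update}) -- and, as a convex combination of existing water levels, every macro move as well -- is non-expanding in the supremum norm on profiles: if $\eta$ and $\tilde\eta$ denote two profiles, then $\lVert \eta-\tilde\eta\rVert_\infty$ cannot grow under either a pairwise or a macro update. Since the remainder of the move sequence after each substitution is a finite composition of such $1$-Lipschitz maps, by making each approximating sweep accurate enough we can keep the cumulative error at $v$ below $\epsilon$. This yields $\kappa(v)\geq\widetilde\kappa(v)-2\epsilon$, and letting $\epsilon\downarrow 0$ establishes (b).
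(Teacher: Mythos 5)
Your proposal is correct and follows essentially the same two-step strategy as the paper: in part (a) you exploit the duality extension to note that $\eta_T(v)$ is affine in $\mu_1$ (the paper instead phrases this as an explicit case distinction between $\sum_{u\in A_a}\xi_{T-1}(u)$ and $\sum_{u\in A_b}\xi_{T-1}(u)$, but it is the same endpoint argument), and in part (b) you substitute each macro move by a finite pairwise sweep via Lemma \ref{evenout} and control the accumulated error; the paper bounds the error by a geometric budget $\sum_i \epsilon/2^i$ with the justification that updates are convex combinations, which is exactly the $1$-Lipschitz (in $\lVert\cdot\rVert_\infty$) observation you make explicit.
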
 
 
\begin{proof}
\begin{enumerate}[(a)]
	\item Just as in Lemma \ref{simplif}, we consider a move sequence consisting of finitely many (macro) moves 
	-- say again $T\in\N$ -- and especially the SAD-profile dual to the moves after round 1, denoted by 
	$\{\xi_{T-1}(u),\;u\in V\}$. If the first action is a macro move on the set $A$, let us divide its nodes into
	two subsets according to whether their initial water level is above or below the initial average across $A$:
	$$A_a:=\{u\in A,\;\eta_0(u)\geq\overline{\eta}_0(A)\} \quad\text{and}\quad
    A_b:=\{u\in A,\;\eta_0(u)<\overline{\eta}_0(A)\}.$$
    If $\sum_{u\in A_a}\xi_{T-1}(u)\leq\sum_{u\in A_b}\xi_{T-1}(u)$, changing $\mu_1$ to $\tfrac12$ will not
    decrease the final water level achieved at $v$. If instead $\sum_{u\in A_a}\xi_{T-1}(u)\geq\sum_{u\in A_b}\xi_{T-1}(u)$,
    the same holds for erasing the first move (i.e.\ setting $\mu_1=0$).
    
	\item Obviously, allowing for pipes to be opened simultaneously can if anything increase the maximal water level
	achievable at $v$. However, any such macro move can be at least approximated by opening
	pipes one after another. Levelling out the water profile on a set of more than 2 vertices completely will correspond
	to the limit of infinitely many single pipe moves on the edges between them (in a sensible order).
	
	Let us consider a finite move sequence $\phi$ including macro moves on the sets $A_1\,\dots,A_l$ (in chronological
	order). From part (a) we know that with regard to the final water level achievable at $v$ we can assume w.l.o.g.\
	that all moves are complete averages (i.e.\ $\mu_k=\tfrac12$ for all $k$). Fix $\epsilon>0$ and let us define a
	finite move sequence $\overline{\phi}$ including no macro moves in the following way:
	We keep all the rounds in $\phi$ in which pipes are opened individually. For the macro move on $A_i,\ i\in\{1,\dots,l\},$
	we insert a finite number of rounds in which the pipes of an edge set $E_{A_i}$ connecting $A_i$ are opened in
	repetitive sweeps such that the water level at each vertex $u\in A_i$ is less than $\tfrac{\epsilon}{2^i}$ away
	from the average across $A_i$ after these rounds. Note that Lemma \ref{evenout} guarantees that this is possible.
	
	As opening pipes leads to new water levels being convex combinations of the ones before, the differences of
	individual water levels	caused by replacing the macro moves add up to $\sum_{i=1}^l \tfrac{\epsilon}{2^i}<\epsilon$
	in the worst case. Consequently, the final water level achieved at $v$ by $\overline{\phi}$ is at most
	$\epsilon$ less than the one achieved by $\phi$. Since $\epsilon>0$ was arbitrary, this proves the claim.
	
	Note however that the option of macro moves can make a difference when it comes to the attainability of $\kappa(v)$,
	see Example \ref{seqofseq}.
\end{enumerate}
\end{proof}

\begin{remark}
	Lemma \ref{simplif2}\,(a) states that even for macro moves, there is nothing to be gained by closing the pipes before
	the water levels have balanced out completely. A macro move on the edge set $E_A$ with $\mu_k=\tfrac12$ can be seen
	as the limit of infinitely many single edge moves on $E_A$ in the sense of Lemma \ref{evenout} -- a connection that
	does not exist for macro moves with $\mu_k\in(0,\tfrac12)$.
	We believe that there always exists a finite optimal move sequence if macro moves are allowed. 
	We state this as an open problem.
\end{remark}

Due to Lemmas \ref{simplif} and \ref{simplif2} we can assume w.l.o.g.\ that the parameters $\mu_k$ in optimal move sequences
are always equal to $\tfrac12$ in each round, hence omit them and consider the move sequence to be a list of pipes
(i.e.\  $\phi\in E^T$) only.
We can incorporate an optimal move sequence in which more than one pipe is opened at a time into Definition
\ref{opt} by either allowing $\phi_k$, for $k\in\{0,\dots,T\}$, to be a subset of $E$ with more than one
element on which the levelling takes place or by viewing $\phi$ as a limiting case of move sequences
$\{\phi^{(m)},\;m\in\N\}$, in which pipes are opened separately, that form an infinite type optimal move sequence $\Phi$
-- as just described in the proof of the lemma.

In the sequel however -- if not otherwise stated -- we will stick to the initial regime where pipes are opened
one at a time.

\subsection{Optimizing the move sequence}\label{optimization}

Closely related to the water transport idea is the concept of {\em greedy lattice animals} as introduced by
Cox, Gandolfi, Griffin and Kesten \cite{GLA1}. The vertices of a given graph $G$ are associated with an i.i.d.\ 
sequence of non-negative random variables and a greedy lattice animal of size $n$ is then defined to be a 
connected subset of $n$ vertices containing the target vertex $v$ and maximizing the sum over the associated $n$
random variables. Since we do not care about the size of the lattice animal, let us slightly change this
definition:

\begin{definition}\label{GLAdef}
For a fixed graph $G=(V,E)$, target vertex $v$ and water levels $\{\eta(u)\}_{u\in V}$, let us call
$C\subseteq V$ a {\em lattice animal (LA)} for $v$ if $C$ is connected and contains $v$. $C$ is a {\em greedy lattice animal (GLA)} for $v$ if it maximizes the
average of water levels over such sets.
This average will be considered as its value
$$\text{GLA}(v):=\frac{1}{|C|}\sum_{u\in C}\eta(u).$$
\end{definition}

By Lemma \ref{evenout}, it is clear that $\text{GLA}(v)\leq\kappa(v)$. In fact, for the majority of settings
-- consisting of a graph $G$, a target vertex $v$ and an initial water profile $\{\eta_0(u)\}_{u\in V}$ -- 
strict inequality holds and we can do better than just pooling the amount of water collected in an appropriately
chosen connected set of barrels including the one at $v$.

Furthermore we know from Lemma \ref{simplif2}\,(a) that w.l.o.g.\ the last move of any finite optimal move sequence will
be to pool the amount of water allocated in a connected set of vertices including $v$. This greedy lattice
animal for $v$ in the intermediate water profile created up to that point in time can be more advantageous than
the one in the initial water profile if we imply the following improving steps first:

\begin{description}
  \item[1) Improving bottlenecks] \hfill \\
  Let us call a vertex $u$ a {\em bottleneck} of the GLA $C$ for $v$ if $u\in C\setminus\{v\}$ and
  $\eta(u)<\text{GLA}(v)$.
  Clearly, each bottleneck $u$ has to be a cutting vertex for $C$ (otherwise we could just remove it to
  improve the GLA). If there exists a connected subset of vertices $C_u$ including $u$ which has a higher
  average water level than $C_u\cap C$, the value of the GLA for $v$ is improved if the water collected in
  $C_u$ is pooled first (see Figure \ref{GLA1}). Note that $C_u$ might involve more vertices from $C$ than
  just $u$, see Example \ref{linegraph3}.
  \item[2) Enlargement] \hfill \\
  The second option to raise the value of the GLA $C$ for $v$ is to apply the idea above to a vertex $u$ in
  the vertex boundary of $C$ in order for the original GLA to be enlarged to a set of vertices in which
  $u$ is a bottleneck. For this to be beneficial, there has to exist a connected set of vertices $C_u$ in
  $V\setminus C$ including $u$ with the following property:
  The average water level in $C_u$ is smaller than $\text{GLA}(v)$ -- otherwise it would be part of $C$ --
  but is raised above this value after improving the potential bottleneck $u$ using water located in
  $V\setminus C$ (see Figure \ref{GLA1} below).
  
  \begin{figure}[H]
     \centering
     \includegraphics[scale=0.9]{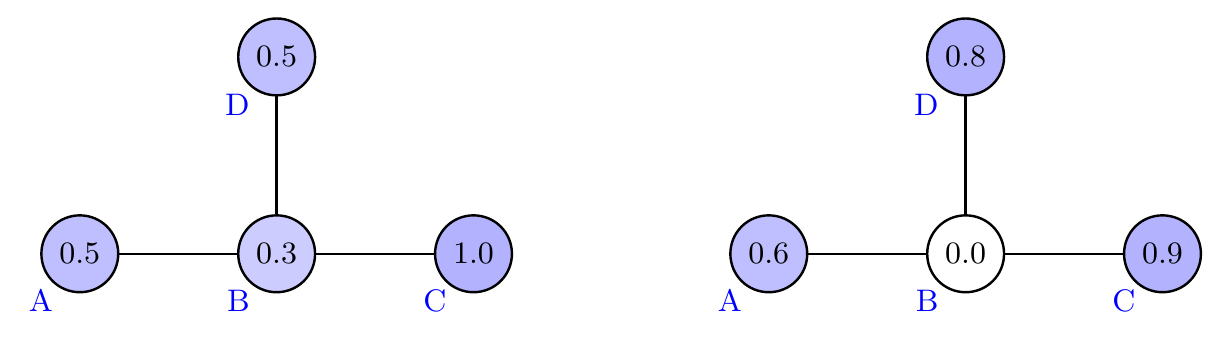}
     \caption{If $A$ is the target vertex, the GLA on the left is $\{A,B,C\}$ (having value 0.6) and the
     bottleneck $B$ can be improved by first opening the pipe $\langle B,D\rangle$.\newline     
     The GLA for $A$ with respect to the water profile on the right is $\{A\}$, but can be enlarged
     to $\{A,B,C\}$ if the potential bottleneck $B$ is improved by opening the pipe
     $\langle B,D\rangle$ first. \label{GLA1}}
   \end{figure}
  
  \item[3) Choose optimal chronological order] \hfill \\
  When applying the improving techniques just described, it is essential to choose the optimal chronological
  order of doing things. Besides the fact that improving bottlenecks and enlarging the GLA has to be done
  before the final averaging, situations can arise in which different sets of vertices can improve the
  same bottleneck or the other way round that more than one bottleneck can be improved using non-disjoint
  sets of vertices, see the set-ups in Figure \ref{GLA2}.
  
  \begin{figure}[H]
     \centering
     \includegraphics[scale=0.9]{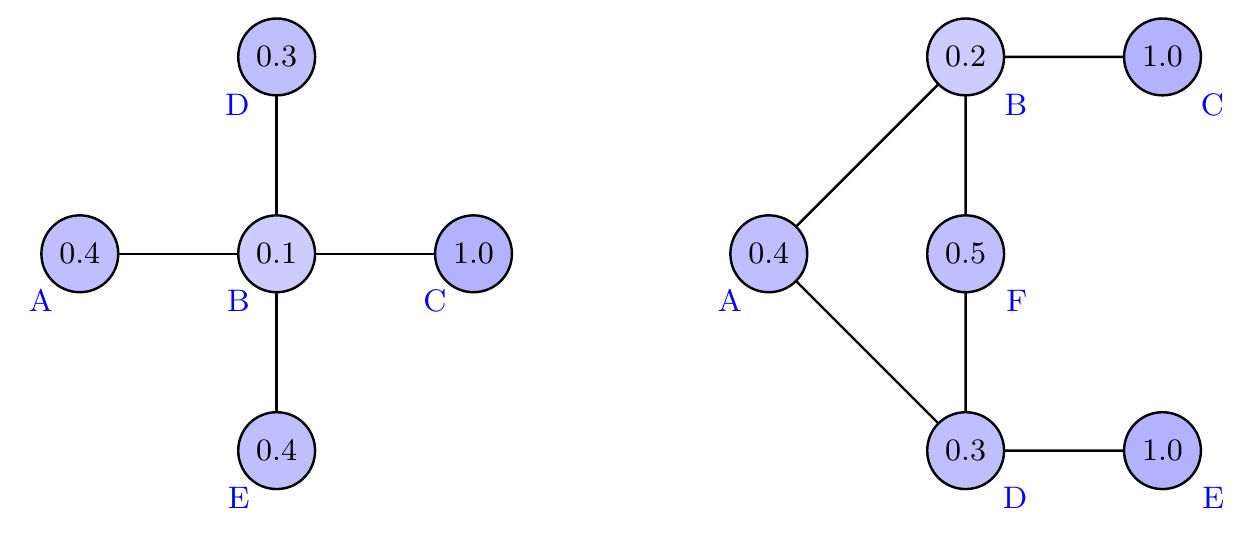}
     \caption{If $A$ is the target vertex, the GLA on the left is $\{A,B,C\}$ (having value 0.5).
     Improving the bottleneck $B$ can be done using $D$ or $E$ and is most effective if the pipe
     $\langle B,D\rangle$ is opened first, then $\langle B,E\rangle$.\newline     
     The GLA for $A$ with respect to the graph on the right is $\{A,B,C,D,E\}$. The water from $F$
     can be used to improve both bottlenecks $B$ and $D$. It is optimal to open pipe $\langle D,F\rangle$
     first and then $\langle B,F\rangle$. \label{GLA2}}
   \end{figure}
\end{description}
\noindent
  Finally, it is worth noticing that lattice animals with lower average than $\text{GLA}(v)$ in the initial
  water profile sometimes can be improved by the techniques just described to finally outperform the initial
  GLA and its possible improvements and enlargements (see Example \ref{linegraph3} and especially Figure
  \ref{15-line2}).

\subsection{Examples}\label{examples}

\begin{example}
The minimal graph which is non-trivial with respect to water transport is a single edge, in other words
the complete graph on two vertices:
$$G=K_2=(\{1,2\},\{\langle1,2\rangle\}).$$
By the considerations in the previous subsection, we get
 \begin{equation}\label{edgekappa}
  \kappa(1)=\begin{cases}\eta_0(1)&\text{if }\eta_0(1)\geq\eta_0(2)\\
                        \tfrac{\eta_0(1)+\eta_0(2)}{2}&\text{if }\eta_0(1)<\eta_0(2).
           \end{cases}
 \end{equation}
Let the initial water levels be given by the two random variables $U_1$ and $U_2$. From (\ref{edgekappa})
it immediately follows that
$$U_1\leq \kappa(1)\leq \max\{U_1, U_2\}.$$

If we assume $U_1$ and $U_2$ to be independent and uniformly distributed on $[0,1]$, a short calculation
reveals the distribution function
$$F_{\kappa(1)}(x)=\begin{cases}\tfrac32 x^2&\text{for }0\leq x \leq \tfrac12\\
                            x-\tfrac12\,(1-x)^2&\text{for }\tfrac12\leq x \leq 1,
           \end{cases}$$
which indeed lies in between $F_{U_1}(x)=x$ and $F_{\max\{U_1,U_2\}}(x)=x^2$, see Figure \ref{plot}.
\begin{figure}[H]
     \centering
     \includegraphics[scale=0.75]{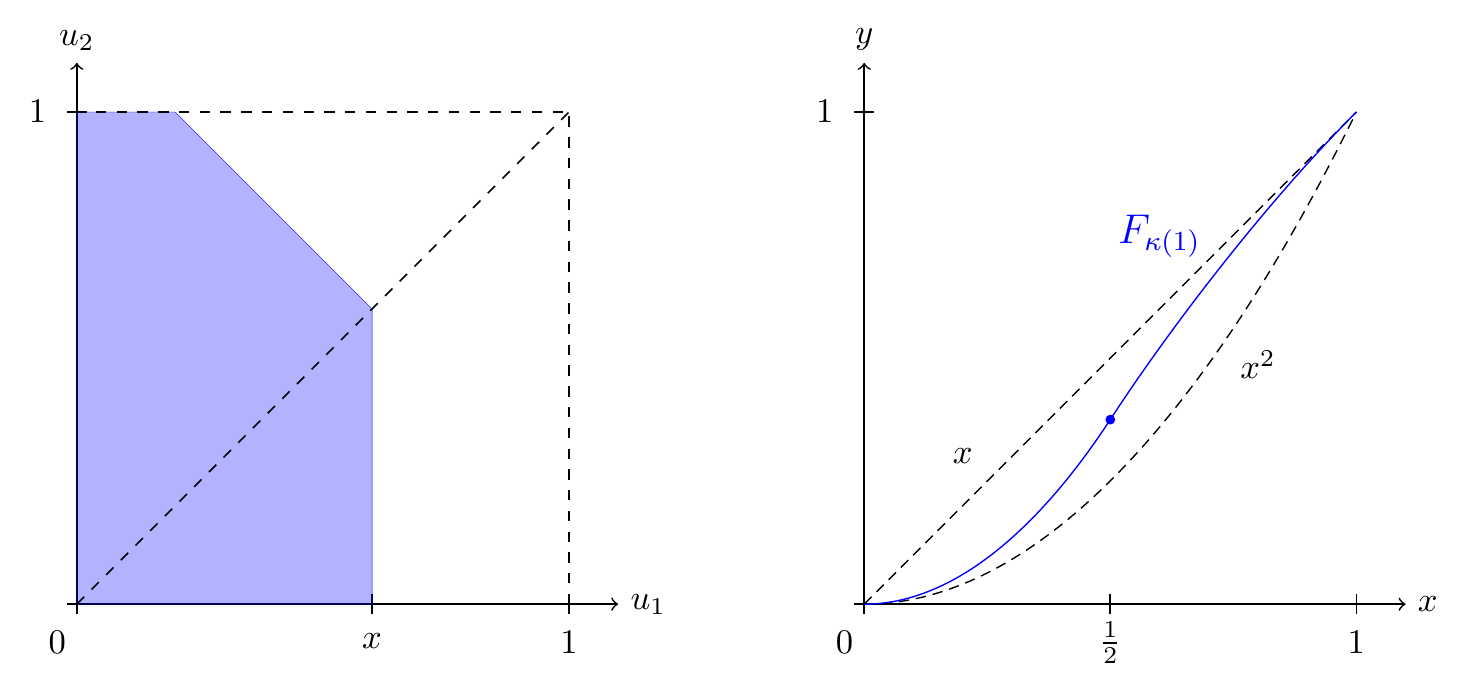}
     \caption{On the left a visualization of $\Prob(\kappa(1)\leq x)$, on the right the distribution
     function of $\kappa(1)$.\label{plot}}
\end{figure}
\noindent By symmetry, the exact same considerations hold for $\kappa(2)$.
\end{example}

\begin{example}\label{linegraph}
\par\begingroup \rightskip14em\noindent
The simplest non-transitive graph (i.e.\ having vertices of different kind, see Definition \ref{quasitrans})
is the line graph on three vertices:
\par\endgroup

\vspace*{-1.5cm}
   \begin{figure}[H]
     \flushright \includegraphics[scale=0.9]{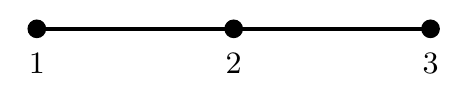}
   \end{figure}
\vspace*{-0.5cm}
 
$$G=(\{1,2,3\},\{\langle1,2\rangle,\langle2,3\rangle\}).$$
Again by the above considerations, we find the supremum of achievable water levels at vertex $1$ to be
\begin{equation*}
  \kappa(1)=\max\Big\{\eta_0(1),\tfrac{\eta_0(1)+\eta_0(2)}{2},\tfrac{\eta_0(1)+\eta_0(2)+\eta_0(3)}{3}\Big\},
 \end{equation*}
which is obviously achieved by a properly chosen greedy lattice animal.

Consider the case in which the initial water levels satisfy
\begin{equation}\label{nofin}
 \eta_0(3)\geq\eta_0(2)\geq\eta_0(1)\quad \text{and}\quad \eta_0(3)>\eta_0(1)
\end{equation}
Then $\kappa(1)=\tfrac{\eta_0(1)+\eta_0(2)+\eta_0(3)}{3}$ and there exists no finite optimal move sequence.
This can be seen from the fact that any single move will preserve the inequalities in (\ref{nofin}) and thus
we have $\eta_T(1)<\kappa(1)<\eta_T(3)$ for all finite move sequences $\phi\in E^T$.
 
If we consider the initial water levels to be independent and identically distributed, the (random) supremum
of achievable water levels at vertex 2 is stochastically larger than the one at vertex 1: As $\eta_0(1)$ and
$\eta_0(2)$ have the same distribution so do
$$\kappa(1)\quad \text{and}\quad \max\Big\{\eta_0(2),\tfrac{\eta_0(1)+\eta_0(2)}{2},\tfrac{\eta_0(1)+\eta_0(2)+\eta_0(3)}{3}\Big\}.$$
The latter is less than or equal to $\kappa(2)$. The maximal value achievable by greedy lattice animals at vertex 2 is
\begin{equation*} \text{GLA}(2)=\max\Big\{\eta_0(2),\tfrac{\eta_0(1)+\eta_0(2)}{2},\tfrac{\eta_0(2)+\eta_0(3)}{2},
\tfrac{\eta_0(1)+\eta_0(2)+\eta_0(3)}{3}\Big\}. \end{equation*}
The fact that we can average across one pipe at a time and choose the order of updates allows us to improve over
this and gives
\begin{equation}\label{kappa2} \kappa(2)=\max\Big\{\text{GLA}(2),\tfrac12\,\big(\eta_0(1)+\tfrac{\eta_0(2)+\eta_0(3)}{2}\big),
\tfrac12\,\big(\eta_0(3)+\tfrac{\eta_0(1)+\eta_0(2)}{2}\big)\Big\}. \end{equation}
To see this, we can take a closer look on the SAD-profiles that can be created by updates along the two
edges $\langle1,2\rangle$ and $\langle2,3\rangle$ starting from the initial profile $\xi_0=(0,1,0)$: After one
update -- depending on the chosen edge -- the profile is given by $\xi_1=(\tfrac12,\tfrac12,0)$ or $(0,\tfrac12,\tfrac12)$.
After the second step we end up with either $\xi_2=(\tfrac12,\tfrac14,\tfrac14)$ or $(\tfrac14,\tfrac14,\tfrac12)$.
All of the corresponding convex combinations appear in the right hand side of (\ref{kappa2}). By Lemma \ref{evenout},
we know that continuing like this will finally result in the limiting profile $(\tfrac13,\tfrac13,\tfrac13)$.
It is not hard to check that any sequence of two or more updates will lead to a monotonous SAD-profile with a
largest value of at most $\tfrac12$ at one and a smallest value of at least $\tfrac14$ at the other end.
For this reason, it can be written as a convex combination of $(0,1,0)$ plus either $(\tfrac12,\tfrac14,\tfrac14)$ 
and $(0,\tfrac12,\tfrac12)$ or $(\tfrac14,\tfrac14,\tfrac12)$ and $(\tfrac12,\tfrac12,0)$. Consequently, it cannot
correspond to a final water level at vertex 2 exceeding the value in (\ref{kappa2}).

By an elementary calculation, for independent $\text{unif}([0,1])$ initial water levels, we obtain
\begin{align*}
F_{\kappa(1)}(x)=\Prob(\kappa(1)\leq x)&=\int_{s=0}^x\limits\int_{t=0}^{\min\{2x-s,1\}}\limits\min\{3x-(s+t),1\}
\,\mathrm{d}t\,\mathrm{d}s\\
                     &=\begin{cases}\tfrac83 x^3\\
                                    -\tfrac{11}{6}x^3+\tfrac92 x^2-\tfrac32 x+\tfrac16\\
                                    -\tfrac{23}{6}x^3+\tfrac{13}{2} x^2-2x+\tfrac16\\
                                    \tfrac{2}{3}x^3-\tfrac52 x^2+4x-\tfrac76\\
                       \end{cases}\text{ for }
                       \begin{cases}x \in [0,\tfrac13]\\x \in [\tfrac13,\tfrac12]\\ x \in [\tfrac12,\tfrac23]
                       \\ x \in [\tfrac23,1]\\ \end{cases}\hspace*{-0.2cm},
\end{align*}
and similarly
\begin{align*}
F_{\kappa(2)}(x)&=\int_{s=0}^x\limits\int_{t=0}^{\min\{2x-s,1\}}\limits\hspace*{-0.5cm}
                  \min\{1,2x-s,3x-(s+t),4x-(2s+t),2x-\tfrac{s+t}{2}\}\,\mathrm{d}t\,\mathrm{d}s\\
                     &=\begin{cases}2x^3\\
                                    -\tfrac{14}{3}x^3+9 x^2-4 x+\tfrac{7}{12}\\
                                    \tfrac23 x^3-3x^2+5x-\tfrac53\\
                       \end{cases}\text{ for }
                       \begin{cases}x \in [0,\tfrac12]\\ x \in [\tfrac12,\tfrac34]
                       \\ x \in [\tfrac34,1]\\ \end{cases}\hspace*{-0.2cm},
\end{align*}                       
which is strictly smaller than $F_{\kappa(1)}(x)$ implying $\kappa(1)\preceq\kappa(2)$, where $\preceq$ denotes
the usual stochastic order.
Due to the fact that adding the edge $\langle1,3\rangle$ will not give an improvement over the optimal move sequences
for vertex $2$, this stochastic domination already follows from the fact that $\kappa(1)$ is non-decreasing when
adding an edge and the symmetry of $K_3$.

In fact, when optimizing the move sequence for the middle vertex we can neglect the option of levelling out
the profile completely, since for any initial water profile there is a {\em finite} optimal move sequence $\phi\in E^T$
achieving $$\eta_T(2)\geq\tfrac13\,\big(\eta_0(1)+\eta_0(2)+\eta_0(3)\big),$$
as the next example will show.
\end{example}

\begin{example}\label{complete}
Given an initial water profile $\{\eta_0(u)\}_{u\in V}$ and the complete graph $K_n$ as underlying network,
we get for any $v\in V$:
$$\kappa(v)=2^{-l+1}\,\eta_0(v)+\sum_{i=1}^{l-1} 2^{-i}\;\eta_0(v_i),$$
where $V$ is ordered such that $\eta_0(v_1)\geq\eta_0(v_2)\geq\dots\geq\eta_0(v_n)$ with $v=v_l$. Furthermore,
this optimal value can be achieved by a finite move sequence.

To see this is not hard having Lemmas \ref{dual} and \ref{SADmax} in mind.
If $v=v_1$, the highest water level is already in $v$ and the best strategy is to stay away from the pipes.
For $v\neq v_1$, the contribution of vertex $v_1$ -- i.e.\ the share $\xi_T(v_1)$ in the convex combination of
$\{\eta_0(u)\}_{u\in V}$ optimizing $\eta_T(v)$, see (\ref{convcomb}) -- can not be more than $\tfrac12$ by Lemma
\ref{SADmax}. However, this can be achieved by opening the pipe
$\langle v,v_1\rangle$.
According to the duality between water transport and SAD, this is what we do last. The argument just used
can be iterated for the remaining share of $\tfrac12$ giving that $v_2$ can contibute at most $\tfrac14$
(given that $v_1$ contributes most possible) and so on.
Obviously, involving vertices holding water levels below $\eta_0(v)$ can not be beneficial, as all vertices are
directly connected, so we do not have intermediate vertices being potential bottlenecks.

The optimal move sequence $\phi\in E^T$, where $T=l-1$, is then given by 
$$\phi_k=\langle v,v_{l-k}\rangle,\ k=1,\dots,l-1$$
leading to 
$$\eta_k(v)=2^{-k}\,\eta_0(v)+\sum_{i=1}^{k} 2^{-k+i-1}\;\eta_0(v_{l-i})$$
and consequently $\eta_T(v)=\eta_{l-1}(v)=\kappa(v)$. Note that the option to open several pipes simultaneously
is useless on the complete graph. Furthermore the above move sequence only includes edges to which $v$ is incident,
so the very same reasoning holds for the center $v$ of a star graph on $n$ vertices as well.

To determine the optimal achievable value at $v$ we have to sort the $n$ initial water levels first. 
This can be done using the randomized sorting algorithm `quicksort' which makes $\text{O}(n\,\log(n)))$
comparisons on average, $\text{O}(n^2)$ in the worst case. The calculation of $\kappa(v)$ given the sorted list of
initial water levels needs at most $n-1$ additions and $n-1$ divisions by $2$.
\end{example}

\begin{example}\label{linegraph2}
Expanding Example \ref{linegraph}, let us reconsider the line graph -- this time not on three
but $n$ vertices. Let the vertices be labelled $1$ through $n$ and let vertex $1$ (sitting at one end of the line) be
the target vertex. Given an initial water profile $\{\eta_0(i)\}_{i=1}^n$, $\kappa(1)$ can be determined by $2n-2$
arithmetic operations ($n-1$ additions, $n-1$ divisions) as it turns out to be
\begin{equation}\label{kappa_line}\kappa(1)=\max_{1\leq l\leq n}\frac1l \sum_{i=1}^l \eta_0(i).\end{equation}
In other words, $\kappa(1)$ is achieved by averaging over the greedy lattice animal for vertex 1 with respect to
the initial water profile (see Definition \ref{GLAdef}).

In order to establish this, let us first define for any water profile $\{\eta_T(i)\}_{i=1}^n$ achievable
from $\{\eta_0(i)\}_{i=1}^n$ (in finite time $T\in\N_0$) the corresponding normed vector
$$\lambda_T:=\tfrac1M\,\Big(\eta_T(1),\dots,\eta_T(n)\Big),$$
which can be understood to be a probability measure on $\{1\dots,n\}$ -- as
(\ref{update}) preserves the total mass $M:=\sum_{i=1}^n\eta_0(i)$.

Related to this construction, there are two important observations to make:
Firstly, if we consider two water profiles on $\{1\dots,n\}$ with the same total mass $M$ and their corresponding
probability measures being such that one stochastically dominates the other, this relation is preserved when the
two profiles are exposed to the same update of the form (\ref{update}), see also La.\ 2.4 in \cite{ShareDrink}.
Secondly, if $\{\eta_{k+1}(i)\}_{i=1}^n$ arises from $\{\eta_{k}(i)\}_{i=1}^n$ by an update of the form (\ref{update})
along an edge $\langle u,u+1\rangle$, where the water level at $u+1$ is higher than the one at $u$, the
corresponding probability measure $\lambda_{k+1}$ is stochastically dominated by $\lambda_k$.

If we choose to open only this kind of pipes (where an update brings water closer to vertex $1$) in a sensible
order -- in permanent sweeps from $\langle 1,2\rangle$ to $\langle n-1, n\rangle$ for example -- we get a
stochastically decreasing sequence of probability measures $(\lambda_k)_{k\in\N_0}$, whose limit is stochastically
dominated by any measure $\tfrac1M\,\big(\eta_T(1),\dots,\eta_T(n)\big)$, where $T\in\N_0$ and
$\{\eta_T(i)\}_{i=1}^n$ was created by updates of the form (\ref{update}) successively applied to the initial
profile $\{\eta_0(i)\}_{i=1}^n$.

Furthermore, following this update scheme the water profiles are tending to a piecewise constant profile:
Any relation ``the barrel at vertex $x$ holds at most as much water as the one at $x+1$''
will be preserved if we only open pipes $\langle u,u+1\rangle$, where $u+1$ has a water level higher than
the one at $u$. For that reason, already the initial water profile determines two sorts of pipes:
If $L$ is minimal in respect of
$$\frac1L \sum_{i=1}^L \eta_0(i)=\max_{1\leq l\leq n}\frac1l \sum_{i=1}^l \eta_0(i)$$
and $L>u$, the water level at $u+1$ will eventually be higher than the one at $u$ causing infinitely many updates
along $\langle u,u+1\rangle$ (or the very same water level at $u$ and $u+1$) in the sequel. If instead $L\leq u$,
either the water level at $u$ is always at least as high as at $u+1$ and $\langle u,u+1\rangle$ will never be opened
or the barrel at $u$ is at some point emptier than the one at $u+1$ leading to infinitely many updates along
$\langle u,u+1\rangle$ or eventually the same water level at $u$ and $u+1$.
This establishes the claimed shape of the limiting profile (according to Lemma \ref{evenout}). Its optimality can be
seen as follows:
The probability measure $\lambda_T$ corresponding to an arbitrary achievable profile $\{\eta_T(i)\}_{i=1}^n$
dominates $\lambda_k$ for $k$ large enough, hence
$$\eta_T(1)= M\cdot\lambda_T(1)\leq M\cdot\lambda_k(1)=\eta_k(1)\leq\lim_{k\to\infty}\eta_k(1),$$
and (\ref{kappa_line}) follows.

If we allowed macro moves (opening several pipes simultaneously), the first (and only) move would be to open
the pipes $\langle1,2\rangle,\dots,\langle L-1,L\rangle$.
\end{example}

\begin{example}\label{linegraph3}
Finally, let us consider the line graph on $n$ vertices, with the target vertex $v$ not sitting at one end.
\begin{figure}[H]
	\centering
	\includegraphics[scale=0.8]{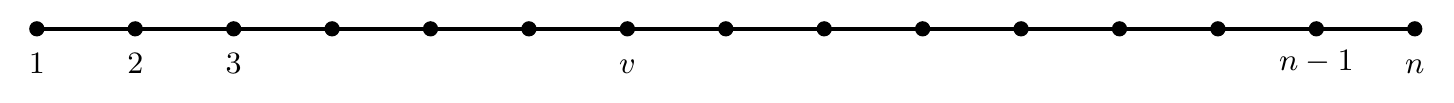}
\end{figure}

Given the initial water levels $\{\eta_0(u),\;1\leq u\leq n\}$, let us consider the final SAD-profile
$\{\xi(u)\}_{1\leq u\leq n}$ corresponding to an optimal move sequence (if the move sequence is of infinite type,
it is the limit of its dual SAD-profiles we are talking about, cf. Lemmas \ref{dual} and \ref{simplif}).

First of all, from Lemma \ref{collection}\,(a) we know that any achievable SAD-profile on line graphs
is unimodal (which therefore holds for a pointwise limit of SAD-profiles as well). Let us denote the leftmost
maximizer of $\{\xi(u)\}_{1\leq u\leq n}$ by $q$ and set
$$l:=\min\{1\leq u\leq n,\;\xi(u)>0\}\quad\text{and} \quad r:=\max\{1\leq u\leq n,\;\xi(u)>0\}.$$
By symmetry, we can assume without loss of generality $l\leq v\leq q\leq r$ -- if $q<v$, the set-up is merely mirrored.
Furthermore, let us pick the optimal move sequence such that $\{\xi(u)\}_{1\leq u\leq n}$ minimizes the distance $q-v$.

The contribution from the nodes $\{q,q+1,\dots,n\}$ can be seen as a scaled-down version of the problem treated
in the previous example: This time the drink to be shared does not amount to 1 but to $\sum_{q\leq u\leq r}\xi(u)$
instead. From Example \ref{linegraph2} we can therefore conclude that a flat SAD-profile i.e.\ 
\begin{equation}\label{flatend}
\xi(q)=\xi(q+1)=\ldots=\xi(r)
\end{equation}
is optimal. The same holds for the contribution coming from $\{1,2,\dots,v-1\}$, i.e.\ 
\begin{equation}\label{flatend2}
\xi(l)=\xi(l+1)=\ldots=\xi(v-1).
\end{equation}
In addition to that, from Lemma \ref{collection}\,(c) we know $\xi(r)\leq\tfrac{1}{r-v+1}$.

If $l=v$, part (b) of Lemma \ref{collection} in turn implies $v=q$. The SAD-profile then
features only one non-zero value (namely $\tfrac{1}{r-v+1}$) and corresponds to the
greedy lattice animal for $v$ consisting of the vertices $v,v+1,\dots,r$. If instead $l<v$ -- compared to
the balanced average across $\{v,v+1,\dots,r\}$ just described -- the contribution to the final water
level at $v$ (cf.\ (\ref{convcomb})) given by
\begin{equation}\label{replacement}
\sum_{u=l}^{v-1}\xi(u)\,\eta_0(u)\quad\text{replaced the contribution}\quad
\sum_{u=v}^{r}\big(\tfrac{1}{r-v+1}-\xi(u)\big)\,\eta_0(u),
\end{equation}
where necessarily $\sum_{u=l}^{v-1}\xi(u)=\sum_{u=v}^{r}\big(\tfrac{1}{r-v+1}-\xi(u)\big)=:M$.
As $q$ is a mode and due to (\ref{flatend}) we have
\begin{equation}\label{decrease}
\tfrac{1}{r-v+1}-\xi(v)\geq\ldots\geq\tfrac{1}{r-v+1}-\xi(q)=\ldots=\tfrac{1}{r-v+1}-\xi(r).
\end{equation}
The aforementioned replacement is most beneficial if the weighted average to the right in (\ref{replacement})
is made as {\em small} as possible, keeping $M$ fixed. In view of (\ref{decrease}) we can conclude, applying
again the ideas from the foregoing example -- this time think of the initial profile $C-\eta_0(u)$
considered for $v\leq u\leq r$ only -- that this is achieved once more by a balanced average. Hence $l<v$ implies
$v<q$ and the just mentioned balanced average has to stretch to the right as far as $q-1$, i.e.\
$\xi(v)=\ldots=\xi(q-1)<\tfrac{1}{r-v+1}=\xi(q)$, since otherwise $q$ would not be the leftmost mode.
From this and (\ref{flatend2}) we find $M=(v-l)\cdot\xi(l)=(q-v)\cdot \big(\tfrac{1}{r-v+1}-\xi(v)\big)$.

The assumption that $q-v$ was minimized when picking the optimal move sequence considered, forces
\begin{equation*}
\sum_{u=l}^{v-1}\xi(u)\,\eta_0(u)\ >\ \sum_{u=v}^{q-1}\big(\tfrac{1}{r-v+1}-\xi(u)\big)\,\eta_0(u),
\end{equation*}
since otherwise the balanced average across $\{v,v+1,\dots,r\}$ would have been at least as good.
Connecting $v$ to barrels to the left consequently yields an improvement of the final water level at $v$
(in comparison to $\tfrac{1}{r-v+1}\sum_{u=v}^{r}\eta_0(u)$) to the amount of
\begin{equation*}
\sum_{u=l}^{v-1}\xi(u)\,\eta_0(u)-\sum_{u=v}^{q-1}\big(\tfrac{1}{r-v+1}-\xi(u)\big)\,\eta_0(u)
=M\cdot\Big(\tfrac{1}{v-l}\sum_{u=l}^{v-1}\eta_0(u)-\tfrac{1}{q-v}\sum_{u=v}^{q-1}\eta_0(u)\Big).
\end{equation*}
As a consequence, $M$ must be as large as possible for an optimal move sequence, which means $\xi(l)=\xi(q-1)$ and
makes $\{\xi(u)\}_{1\leq u\leq n}$ a piecewise constant profile taking on two non-zero values, $\xi(l)$ and 
$\xi(r)$, as depicted below.
\begin{figure}[H]
	\centering
	\includegraphics[scale=0.8]{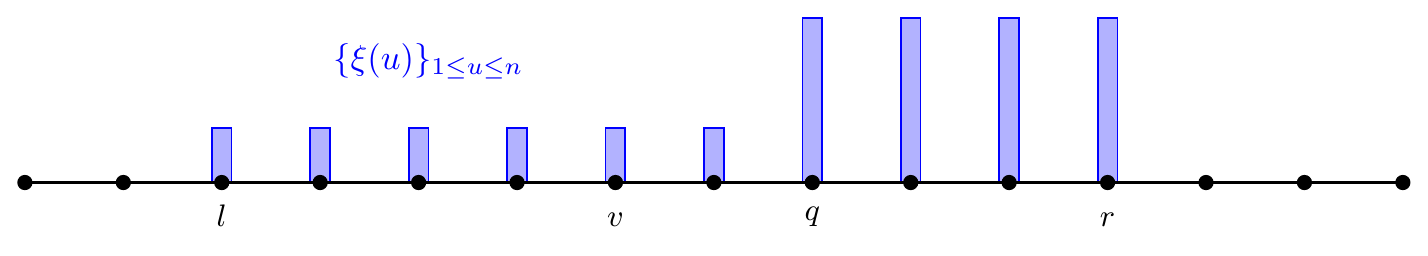}
\end{figure}
Note that the value $\xi(r)=\tfrac{1}{r-v+1}$ (and so even $\xi(l)$) is already determined by the choice
of $l,q$ and $r$. In Figure \ref{15-line2} below, a set of initial water levels on the line graph comprising 15 nodes
is shown, for which the SAD-profile corresponding to an optimal move sequence is the one shown above. Furthermore,
it can be seen from this instance that the GLA with respect to the initial water profile and its possible
enhancements can be outperformed by improving another lattice animal as mentioned at the end of Subsection \ref{optimization}.
\begin{figure}[H]
	\centering
	\includegraphics[scale=0.4]{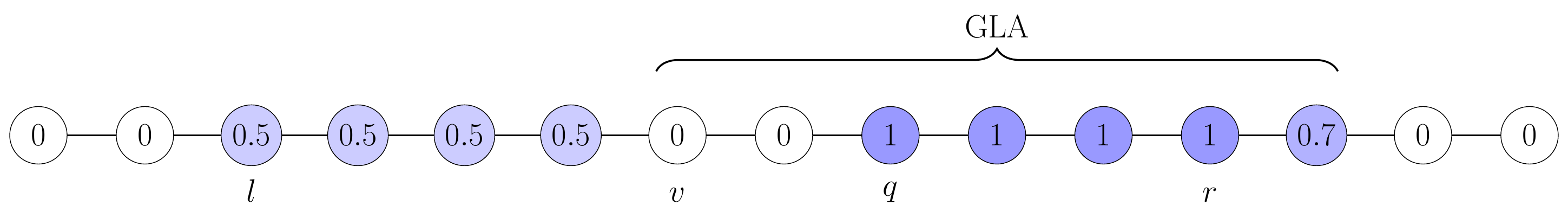}
	\caption{Even for a graph as simple as the line graph, the initial GLA sometimes has little to do with the
		     optimal move sequence.\label{15-line2}}
\end{figure}
When it comes to the complexity of finding $\kappa(v)$, we can greedily test all choices for $l,q,r$ -- of which
there are less than $n^3$. For each choice at most $n+3$ additions/subtractions and four multiplications/divisions have
to be made to calculate either
\begin{equation}\label{value}\begin{array}{c}
\tfrac{q-v}{(q-l)\,(r-v+1)}\sum_{u=l}^{q-1}\limits\eta_0(u)+\tfrac{1}{r-v+1}\sum_{u=q}^{r}\limits\eta_0(u) \quad\text{or}\\
\tfrac{1}{v-l+1}\sum_{u=l}^{\hat{q}}\limits\eta_0(u)+\tfrac{v-\hat{q}}{(r-\hat{q})\,(v-l+1)}\sum_{u=\hat{q}+1}^{r}\limits\eta_0(u),
\end{array}
\end{equation}
depending on whether $v\leq q$ or $\hat{q}\leq v$, where $\hat{q}$ is the rightmost mode of $\{\xi(u)\}_{1\leq u\leq n}$.
Even if there might exist SAD-profiles with $q<v<\hat{q}$ corresponding to optimal move sequences, by the above we know that
there has to be one with either $v\leq q$ or $\hat{q}\leq v$ as well.
The maximal value among those calculated in (\ref{value}) equals $\kappa(v)$, so the complexity is $\mathcal{O}(n^4)$.
\end{example}

\begin{example}\label{seqofseq}
	
The preceding example can serve to give a concrete instance in which even an infinite sequence of single edge moves
can not achieve the supremum as mentioned after Definition \ref{opt}.
\par\begingroup \rightskip11em\noindent Consider the line graph on four vertices, the target
vertex not to be one of the end vertices and initial water levels as depicted to the right.
\par\endgroup

\vspace*{-1.6cm}
\begin{figure}[H]
	\flushright \includegraphics[scale=0.45]{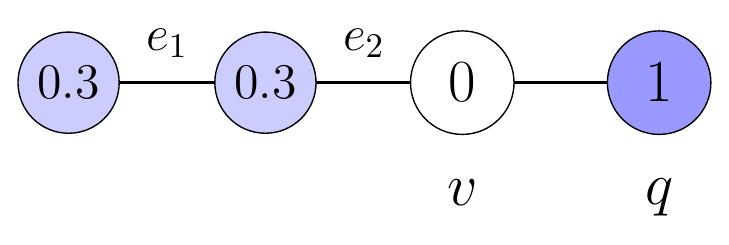}
\end{figure}
\vspace*{-0.45cm}

From Example \ref{linegraph3} we know that the optimal SAD-profile will allocate $\tfrac16$ of the shared glass of water to
each of the vertices to the left of $v$ and $v$ itself, the maximal amount of $\tfrac12$ to the rightmost vertex $q$ -- showing
that $\kappa(v)=0.6$: First, recall that any SAD-profile on a line graph is unimodal. If $q$ is not the (only)
mode, the contribution of $v$ and $q$ has an average of at most $0.5$ and thus the SAD-profile in question
yields a water level at $v$ of at most $0.5$ -- see (\ref{convcomb}). If $q$ is the mode, the SAD-profile is non-decreasing from
left to right and thus a flat profile on the vertices other than $q$ uniquely optimal. Finally, to achieve the optimum, the
contribution of $q$ has to be maximal, i.e.\ $\tfrac12$ (see Lemma \ref{SADmax}).

From the considerations in Thm.\ 2.3 in \cite{ShareDrink} it is clear that this SAD-profile, more precisely the value $\tfrac12$
at $q$, can only be established if the first move is $v$ sharing the drink with $q$ (which corresponds to the last move in the
water transport -- see Lemma \ref{dual}). Once $v$ starts to share the drink to the left, any other interaction with $q$ will
decrease the contribution of the latter and thus put a water level of $0.6$ at $v$ out of reach. 

To get a flat profile on three vertices, we need however infinitely many single-edge moves (here on $e_1$ and $e_2$). An infinite
type optimal move sequence is for example given by
\begin{eqnarray*}
\Phi&=&\{\phi^{(m)},\;m\in\N\},\quad \text{where}\quad \phi^{(m)}\in E^{T_m},\ T_m=2m+1\quad\text{and}\\
\phi^{(m)}&=&(\underbrace{e_1, e_2, e_1, e_2,\dots,}_{m \text{ pairs}}\langle v,q\rangle),
\end{eqnarray*}
achieving
$$\lim_{m\to\infty}\eta_{T_m}(v)=0.6=\kappa(v),$$
a value that can not be approached by any stand-alone (finite or) infinite sequence of moves.

If we allow macro moves, however, there is a two-step move sequence achieving the water level $0.6$ at $v$: First we open the
pipes $e_1$ and $e_2$ simultaneously to pool the water of the vertices other than $q$ and in the second round, we open
the pipe $\langle v,q\rangle$.
\end{example}

\section{Complexity of the problem}\label{complex}

In this section, we want to build on the complexity considerations for the water transport on finite graphs
from the examples in Section \ref{finite}. In fact, we want to show that the task of determining whether $\kappa(v)$
is larger or smaller than a given constant -- for a generic set-up, consisting of a graph, target vertex and initial water
profile -- is an NP-hard problem. This is done by establishing the following theorem:

\begin{theorem}\label{NPhard}
	The NP-complete problem 3-SAT can be polynomially reduced to the decision problem of whether
	$\kappa(v)> c$ or not, for an appropriately chosen water transport instance and constant $c$.
\end{theorem}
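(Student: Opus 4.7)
The plan is to give a polynomial reduction from 3-SAT to the decision problem ``$\kappa(v)>c$''. Given an instance $\Phi = C_1\wedge\cdots\wedge C_m$ on Boolean variables $x_1,\dots,x_n$, I would construct in time polynomial in $n+m$ a graph $G=(V,E)$, a target vertex $v$, an initial water profile $\{\eta_0(u)\}_{u\in V}$ and a rational threshold $c$ of polynomial bit-length, such that $\kappa(v)>c$ if and only if $\Phi$ is satisfiable.

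The heart of the construction is a \emph{variable gadget} for each $x_i$: two high-water reservoir nodes $T_i$ and $F_i$ (representing the literals $x_i$ and $\neg x_i$) attached to the rest of the graph through a short bottleneck structure designed so that, in the dual SAD picture afforded by Lemma~\ref{dual}, any reachable profile $\xi_T$ satisfies a tight coupling between $\xi_T(T_i)$ and $\xi_T(F_i)$. Concretely, sending significant mass to one of the two reservoirs must drive the mass on the other close to $0$; this is enforced by combining the bound $\xi_T(u)\le\tfrac12$ from Lemma~\ref{SADmax} with a local geometry (for instance $T_i$ and $F_i$ connected to the ``$v$-side'' only through a common low-water cut vertex) that ensures that averaging through one branch pollutes the other. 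The gadget thus encodes the binary choice \emph{``set $x_i$ true''} versus \emph{``set $x_i$ false''}.

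For each clause $C_j=\ell_{j,1}\vee\ell_{j,2}\vee\ell_{j,3}$, I would then attach a \emph{clause gadget} consisting of a bonus vertex $B_j$ with a carefully chosen water level, reachable from $v$ only through three parallel short paths, each passing through the reservoir of one of the three literals appearing in $C_j$. The geometry will be arranged so that the full contribution of $B_j$ can be delivered to $v$ only if at least one of those three reservoirs is active in the sense fixed by its variable gadget. The water levels are calibrated so that the combined contributions at $v$ of the $n$ chosen literal reservoirs together with the $m$ clause bonuses exceed $c$ precisely when every clause is satisfied. The easy direction of the reduction --- a satisfying assignment yielding a finite move sequence with $\eta_T(v)>c$ --- then amounts to writing down the obvious move sequence suggested by the assignment: inside each variable gadget perform the extraction of the literal selected by the assignment, route each clause bonus through one of its satisfied literal paths, and finally pool everything into $v$ in the greedy fashion of Example~\ref{complete}.

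The main obstacle is the converse direction. For unsatisfiable $\Phi$ one must rule out that some clever (finite or infinite-type) move sequence achieves $\eta_T(v)>c$. By Lemma~\ref{dual} this becomes the upper-bound question of showing that, for every SAD-profile $\xi_T$ reachable from $\delta_v$, the convex combination $\sum_u\xi_T(u)\,\eta_0(u)$ is at most $c$. The strategy I foresee is to use the structural constraints on reachable $\xi_T$ --- Lemma~\ref{SADmax}, the averaging invariants of Lemma~\ref{evenout}, and the specific bottleneck geometry of the gadgets --- to extract from $\xi_T$, by thresholding the reservoir weights, a consistent truth assignment; any clause whose bonus $B_j$ contributes non-negligibly to $\eta_T(v)$ must then be satisfied by that assignment, contradicting unsatisfiability. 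The technical challenge is tuning the gadget parameters and the threshold $c$ so that the inevitable fractional slack (mass smeared between $T_i$ and $F_i$, or losses incurred when routing $B_j$) is strictly dominated by the gap across $c$; this quantitative calibration, rather than the combinatorial idea itself, is where I expect the real work of the proof to lie. Checking that the final construction has size polynomial in $n+m$ is then routine and yields the NP-hardness claim.
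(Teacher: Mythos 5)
Your high-level plan is the standard NP-hardness template (variable gadgets that enforce a binary choice, clause gadgets that test disjunctions, a threshold that separates satisfiable from unsatisfiable), and it is the same problem (3-SAT) as the paper reduces from. But there is a genuine gap: the gadgets you propose are not specified concretely enough to be evaluated, and the mechanisms you sketch do not obviously survive contact with the actual water-transport/SAD semantics. Lemma~\ref{SADmax} gives a \emph{per-vertex} bound $\xi_T(u)\le\tfrac12$; it does not give a joint ``if a lot of mass reaches $T_i$ then almost none reaches $F_i$'' coupling, and a ``common low-water cut vertex'' gives at most an additive cap $\xi_T(T_i)+\xi_T(F_i)\le\text{const}$, which permits an even fractional split. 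The framework has no native notion of a discrete true/false choice: the adversary may open any pipes, partially average, and spread mass across both $T_i$ and $F_i$. Likewise, placing a bonus vertex $B_j$ ``behind'' a literal reservoir does not create a logical gate: $\xi_T(B_j)>0$ whenever the path to $B_j$ is used, independently of whether anything resembling a consistent assignment was respected elsewhere. You acknowledge this yourself --- ``the real work of the proof'' is the unsatisfiable direction --- but that is precisely the content of the theorem, and nothing in the proposal closes it.

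By contrast, the paper does not enforce mutual exclusion topologically. It uses a comb-like graph (teeth of length $\Theta(n^4)$ at level $1$, literal nodes at level $2$, empty clause nodes separated by level-$3$ nodes on the shaft, empty connecting paths of length $\Theta(n^2)$, a single $\tfrac72$ reservoir, threshold $c=2$) and a \emph{resource-scarcity} mechanism: routing tooth water down through a literal node to fill a clause node necessarily pollutes that literal down to about $1$, so for each variable at most one of the two literal nodes can stay at level $2$ and serve as a conduit for the reservoir. The quantitative separation comes from the total amount of water above level $1$ being $O(n)$ while the connecting paths have $\Theta(n^2)$ empty vertices, plus the tree comparison argument (Lemma~\ref{treecomp}) bounding how much a clause node can be raised via its connecting paths. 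If you want to pursue your gadget-based route, you would need to replace the topological ``choice'' and ``gate'' intuitions with similarly hard quantitative arguments about reachable SAD-profiles; as written, the proposal stops exactly where the difficulty begins.
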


Before we deal with the design of an appropriate water transport instance in order to embed the satisfiability
problem {\em 3-SAT}, let us provide the definition of Boolean satisfiability problems as well
as known facts about their complexity.

\begin{definition}
	Let $X=\{x_1,x_2,\dots, x_k\}$ denote a set of {\em Boolean variables}, i.e.\ taking on logic truth values `TRUE' (T)
	and `FALSE' (F). If $x$ is a variable in $X$, $x$ and $\overline{x}$ are called {\em literals} over $X$.
	A {\em truth assignment} for $X$ is a function $t:X\to\{T, F\}$, where $t(x)=T$ means that the variable $x$ is set
	to `TRUE' and $t(x)=F$ means that $x$ is set to `FALSE'. The literal $x$ is true under $t$ if and only if $t(x)=T$,
	$\overline{x}$ is true under $t$ if and only if $t(x)=F$.
	
	A {\em clause} $C$ over $X$ is a disjunction of literals and {\em satisfied} by $t$ if at least one of its literals
	is true under $t$.
	A logic formula $F$ is in {\em conjunctive normal form (CNF)} if it is the conjunction of (finitely
	many) clauses. It is called {\em satisfiable} if there exists a truth assignment $t$ such that all its clauses are
	satisfied under $t$.
	
	The standard Boolean satisfiability problem (often denoted by {\em SAT}) is to decide whether a given formula in CNF is
	satisfiable or not. If we restrict to the case where all the clauses in the formula consist of at most 3 literals it is
	called {\em 3-SAT} 
	.
\end{definition}

3-SAT was among the first computational problems shown to be NP-com\-plete, a result published in a pioneering article by
Cook in 1971, see Thm.\ 2 in \cite{3SAT}. 


	
	

	Let us now turn to the task of embedding 3-SAT into an appropriately designed water transport problem that
	is in size polynomial in $n$, the number of clauses of the given 3-SAT problem:
	
	Given the logic formula $F=C_1\land C_2\land\ldots\land C_n$ in which each of the clauses $C_i$ consists of at most 3
	distinct literals, let us define the comb-like graph depicted in Figure \ref{reduction}. All the white nodes, plus
	the target vertex $v$, represent empty barrels. The other ones that are shaded in blue contain water to the amount
	specified.
	
	\begin{figure}[ht]
		\centering\hspace*{-0.1cm}
		\includegraphics[scale=0.36]{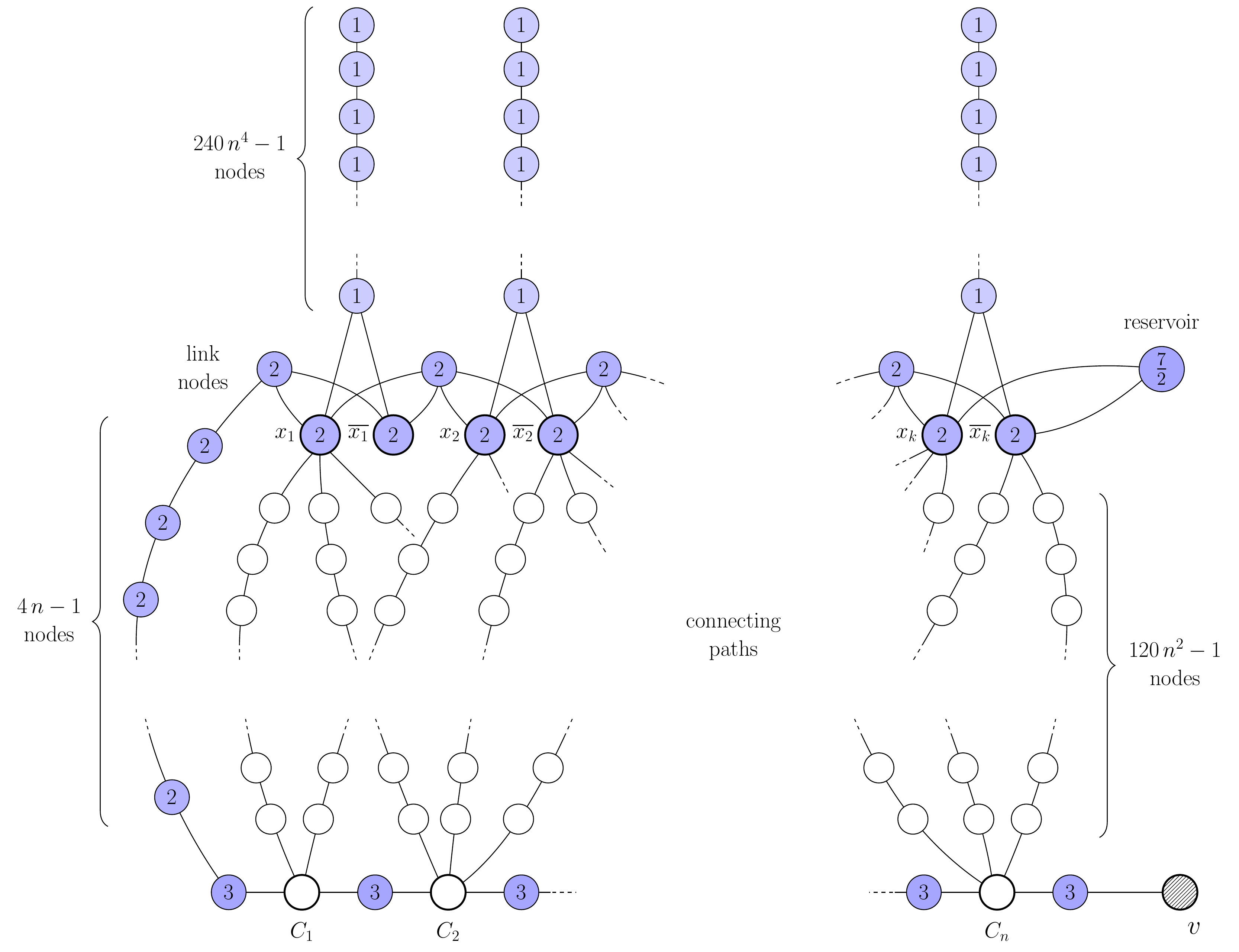}
		\caption{A polynomial reduction of 3-SAT to the water transport problem. \label{reduction}}
	\end{figure}
	
	The comb has $k$ teeth, where $k$ is the number of variables appearing in $F$. Each individual tooth is formed by a line graph
	on $240\,n^4-1$ vertices with water level $1$ each. The lower endvertex of the $i$th tooth is connected to two vertices
	representing the literals $x_i$ and $\overline{x_i}$, having water level $2$ respectively. In between the teeth there are
	$k-1$ {\em link nodes}, each of which features itself a water level of $2$ and is connected to the four nodes representing literals
	of consecutive variables -- more precisely, the link node in between tooth $i$ and $i+1$ is connected to the vertices $x_i,\overline{x_i},x_{i+1},\overline{x_{i+1}}$, for $i\in\{1,\dots,k-1\}$. The vertices representing $x_k,\overline{x_k}$
	are connected to the rightmost link node as well as to an additional vertex featuring a water reservoir of level $\tfrac72$.
	Left of the first tooth, there is another link node (with water level $2$ as well) connected to $x_1$ and $\overline{x_1}$
	as well as by a path to the shaft of the comb, which is described next.	
	
	The comb's shaft is made up of a line graph on $2n+2$ vertices, with the target vertex $v$ to the
	very right. To the left of $v$ there is a vertex representing a barrel with water level $3$ followed by $n$ (empty)
	barrels that stand for the clauses $C_1,\dots,C_n$ and are seperated by a vertex with water level $3$ respectively.
	The left endvertex (connected to $C_1$) features a water level of $3$ as well and is connected to the
	leftmost link node as mentioned before, namely via a path consisting of $4\,n-1$ nodes with water level $2$ each.
	
	Finally, the teeth are connected to the shaft through (disjoint) {\em connecting paths} from nodes
	representing literals to nodes representing clauses, where for example $\overline{x_2}$ is linked to $C_2$ by a path
	if it appears in this clause.
	Each of these paths is formed by a line graph on $120\,n^2-1$ vertices representing empty barrels. Note that each
	clause-node is linked to at most 3 connecting paths, whereas the number of connecting paths originating from a vertex
	representing a literal can vary between $0$ and $n$.\vspace*{1em}
	
	In connection with the water transport problem originating from a 3-SAT formula $F$ as depicted in Figure \ref{reduction},
	we claim the following:
	
	\begin{proposition}\label{decision}
    Consider the water transport problem based on the logical formula $F$, given by the graph, target vertex and initial water
    profile as depicted in Figure \ref{reduction}.    
    \begin{enumerate}[(a)]
    	\item If $F$ is satisfiable, then the water level at $v$ can be raised to a value strictly larger than $2$, i.e.\ 
    	$\kappa(v)>2$.
    	\item If $F$ is not satisfiable, then this is impossible, i.e.\ $\kappa(v)\leq2$.
    \end{enumerate}	
	\end{proposition}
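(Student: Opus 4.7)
My plan is to analyze both directions through the SAD-duality of Lemma \ref{dual}. Writing $\eta_T(v)=2+\sum_u \xi(u)(\eta_0(u)-2)$ for any achievable dual SAD-profile $\xi$, raising $v$ above $2$ is equivalent to making the signed sum $\sum_u \xi(u)\,\delta(u)$ strictly positive, where $\delta(u):=\eta_0(u)-2$ takes the values $+3/2$ on the reservoir, $+1$ on each of the $n+1$ water-$3$ separators in the shaft, $0$ on literal, link and path-to-shaft vertices, $-1$ on tooth vertices, and $-2$ on clauses and connecting-path vertices. The polynomial sizes $120\,n^2-1$ for connecting paths and $240\,n^4-1$ for teeth are calibrated so that any SAD mass placed on a tooth or on a connecting path is vastly more expensive than the bounded positive budget from the separators and the reservoir --- unless the paths are used sparingly and in a \emph{structurally consistent} way that matches a satisfying assignment.

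For part (a), given a satisfying assignment $t$, I would construct a finite-type move sequence by building the dual SAD-profile outward from $v$. First spread mass along the shaft so that each separator and each clause carries a modest share; then, for each clause $C_j$, drain its share outward through the \emph{single} connecting path to a literal $\ell(j)$ that satisfies $C_j$ under $t$ (which exists by satisfiability). Continue the drainage through the chain of $t$-true literals, the link nodes, and finally into the reservoir $R$. Consistency of $t$ ensures that each variable contributes at most one literal to the chosen $\ell(j)$'s, so the other literal of each pair carries essentially no SAD mass and its attached tooth stays outside the support of $\xi$. A direct computation then shows that the $n+1$ separators and the reservoir inject a positive budget of order $\Omega(1/n)$, while the at most $n$ active connecting paths contribute a loss that is lower order in $n$ by virtue of the length calibration $120\,n^2-1$. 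Hence $\kappa(v)\geq\eta_T(v)>2$.

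For part (b), the harder direction, I would argue by contrapositive: assume some achievable $\xi$ satisfies $\sum_u\xi(u)\,\delta(u)>0$ and extract a candidate truth assignment by, for each variable $x_i$, choosing $t(x_i)$ so that $\xi$ routes more mass through the literal designated true than through the other. The target is to prove that every clause is satisfied under this $t$. The key observation is that every water-$3$ separator is an immediate shaft-neighbor of a clause $C_j$, so any positive weight on a separator forces comparable weight on $C_j$; the only way to drain this unavoidable clause-weight at finite $-2$ cost is through a connecting path to some literal. Because the path has length $120\,n^2-1$, the mass delivered costs much more than the separator gain unless the drain is routed through the literal declared true by $t$; and if for some unsatisfied clause no such literal exists, the losses on paths (and, via leakage, on teeth of length $240\,n^4-1$) exceed the total positive budget. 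Consequently every clause is satisfied under $t$, contradicting unsatisfiability.

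The main obstacle is precisely the quantitative bookkeeping in (b). One must rule out clever SAD-profiles that attempt to achieve $\sum_u\xi(u)\,\delta(u)>0$ by a global balancing across all $n$ clauses and $k$ variables at once, and the extraction of $t$ must be robust against ambiguous cases where $\xi$ assigns roughly equal mass to $x_i$ and $\overline{x_i}$. The cleanest path forward is probably a flow- or energy-style inequality coupling SAD mass on each separator to mass on adjacent clauses, combined with a path-cost lemma showing that each unit of mass traversing a length-$120\,n^2-1$ connecting path or a length-$240\,n^4-1$ tooth incurs an unavoidable loss that dominates the bounded separator/reservoir budget whenever the routing is inconsistent with a truth assignment.
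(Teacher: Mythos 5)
Your normalization $\eta_T(v)=2+\sum_u\xi(u)\bigl(\eta_0(u)-2\bigr)$ is a clean reframing, but the bookkeeping in part (a) contains a sign error that breaks the argument. You treat the teeth as \emph{costs} that must be kept ``outside the support of $\xi$.'' They are in fact the only resource that makes the positive direction work. If you spread SAD mass roughly uniformly over the shaft--path--linking-path--reservoir set of size $N=6n+2k+2$ and then ``drain'' each clause's share only through a connecting path toward a literal, every vertex the clause mass visits still has $\delta=-2$ (the connecting path is all water-$0$ barrels). The resulting budget is $\tfrac{1}{N}\bigl(\tfrac{3}{2}+(n+1)-2-2n+(\text{path corrections})\bigr)<0$: the $n$ clause nodes with $\delta=-2$ already outweigh the $n+1$ separators at $\delta=+1$ plus the reservoir at $\delta=+\tfrac{3}{2}$, and routing along $\delta=-2$ paths cannot repair this. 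The paper's construction is precisely designed to push the clause mass \emph{onto the tooth}, whose $240\,n^4-1$ vertices have $\delta=-1$; the star-tree averaging dilutes each clause's effective contribution from $-2$ up to approximately $-1+\tfrac{1}{2n}$, and only then does the net budget turn strictly positive. Without the tooth your plan cannot reach $2$, let alone exceed it.

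Part (b) is a genuinely different route from the paper's, and as sketched it has a gap that is not obviously fillable. The paper does not extract a truth assignment from a near-optimal $\xi$; it classifies the connected set $A$ over which the final averaging is taken, and for each class uses a mass-budget argument plus the comparison Lemma~\ref{treecomp} to show $A$ cannot hold more than $2|A|$ units of water. Your key step, that ``any positive weight on a separator forces comparable weight on $C_j$,'' would amount to a unimodality-type bound on SAD profiles along branching structures, which is not available: the comb is not a line, is not even a tree at the literal/link layer, and the paper explicitly remarks that the SAD maximum bound for general (non-tree) graphs is open. Even if a monotonicity lemma along the shaft held, ``comparable'' would need a quantitative form strong enough to dominate the $\Omega(1/n)$ margins, and the extraction of $t$ from a $\xi$ with ambiguous splits between $x_i$ and $\overline{x_i}$ needs to be made robust. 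Replacing the extraction argument with the paper's case analysis on $A$, together with the tree-comparison lemma, is the concrete way forward.
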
	

	Before we deal with the proof of the proposition, note how it implies the statement of Theorem \ref{NPhard}:
	First of all, if $F$ is a 3-SAT formula consisting of $n$ clauses, $k$ cannot exceed $3\,n$. Given this, it is not
	hard to check that the graph in Figure \ref{reduction} has no more than $720\,n^5+360\,n^3+9\,n+2$ vertices and
	maximal degree at most $n+3$ (or $5$ if $n=1$). As the initial water levels are all in $\{0,1,2,3,\tfrac72\}$,
	the size of this water transport instance is clearly polynomial in $n$.
	Due to the fact that the value of $\kappa(v)$ can be used to decide whether the given formula $F$ is satisfiable
	or not -- as claimed by Proposition \ref{decision} -- Theorem \ref{NPhard} follows.
	\vspace*{1em}
	
	\begin{nproof}{of Proposition \ref{decision}$\,\mathrm{(a)}$}
	To prove the first part of the proposition, let us assume that $F$ is satisfiable. Then there exists a truth assignment
	$t$ with the property that all clauses $C_1,\dots,C_n$ contain at least one of the $k$ literals that are set true by $t$.
	Those can be used to let the water trickle down from the teeth to the line graph at the bottom in an effective way:	
	We assign each clause to one of the true literals under $t$ which it contains. Then, we average the water over
	$k$ (disjoint) star-shaped trees. Each such tree has a literal $x\in\{x_1,\overline{x_1},\dots,x_k,\overline{x_k}\}$
	that is true under $t$ as its center and the top node of the tooth above $x$ as well as the nodes representing the clauses
	assigned to $x$ as leaves (where the clause-nodes are connected to $x$ in the tree via the corresponding connecting paths).
	If $m$ clauses chose $x$, there are $240\,n^4+m\cdot 120\,n^2$ vertices in the tree and the water accumulated amounts
	to $240\,n^4+1$.
	
    By pooling the water along those trees, all the nodes corresponding to clauses can simultaneously be pushed to a water
    level as close to the average of the corresponding trees as we like (see Lemma \ref{evenout}). As $m\leq n$, we can
    bound these averages from below by
    $$\frac{240\,n^4+1}{240\,n^4+120\,mn^2}\geq\frac{240\,n^4+1}{240\,n^4+120\,n^3}> 1-\tfrac{1}{2n}.$$
    So after this procedure, each clause-node will have a water level strictly larger than $1-\tfrac{1}{2n}$. Note that
    only one of each pair $\{x_i,\overline{x_i}\}$ was used as a water passage, so there is still a line graph -- let us
    call it {\em linking path} -- consisting of vertices with water level $2$ exclusively, from the leftmost link node
    to the vertex with initial water level $\tfrac72$ through all link nodes and the untouched literals (the ones that are
    false under $t$).
	
	By another complete averaging -- this time over the line graph that consists of the shaft (i.e.\ the line graph at the bottom
	in Figure \ref{reduction}), the path to the very left connecting the leftmost link node to the shaft, the linking path
	just described, as well as the reservoir with level $\tfrac72$ at the other end -- will push the water level at $v$ beyond
	$$\tfrac{1}{6n+2k+2}\,\Big(\tfrac72+(2\,k+4\,n-1)\cdot 2+(n+1)\cdot3+n\,(1-\tfrac{1}{2n})\Big)
	=\frac{12\,n+4\,k+4}{6\,n+2\,k+2}=2.$$
	Consequently, for the case of satisfiable $F$ we verified for the graph depicted in Figure \ref{reduction}:
	$\kappa(v)>2$.
    \end{nproof}

    In the proof of the second part of the proposition, we need a rough estimate of how much the water level in a
    vertex representing a clause can be raised, if only accessed via connecting paths. This is done in the following lemma.
    
    \begin{lemma}\label{treecomp}
    	In the comb-like graph depicted in Figure \ref{reduction}, it is impossible to push the water level in a clause-vertex
    	above the value of $1+\tfrac{1}{2n}$ without opening the pipes to its left or right neighbor.
    \end{lemma}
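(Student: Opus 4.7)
The plan is to translate via Lemma~\ref{dual} to a bound on the dual SAD profile and then to exploit the fact that the connecting paths form long, empty bottlenecks separating $C_i$ from every vertex of positive water level. Setting $G':=G\setminus\{\langle C_i, C_i^L\rangle,\langle C_i, C_i^R\rangle\}$, where $C_i^L,C_i^R$ denote $C_i$'s two shaft neighbors, the hypothesis amounts to running all moves inside $G'$. By Lemma~\ref{dual}, every achievable water level at $C_i$ is then of the form $\sum_{u\in V}\xi(u)\,\eta_0(u)$ for some SAD-profile $\xi$ starting from $\delta_{C_i}$ in $G'$. Using $\sum_u\xi(u)=1$ I would rewrite this as
\[
\sum_u\xi(u)\,\eta_0(u)\;=\;1+\sum_u\xi(u)\bigl(\eta_0(u)-1\bigr),
\]
reducing the claim to showing $\sum_u\xi(u)(\eta_0(u)-1)\leq \tfrac{1}{2n}$.

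Next I would note that in $G'$ the vertex $C_i$'s only neighbors are the first internal nodes of its (at most three) connecting paths, which are empty throughout their $120n^2-1$ interior vertices. Consequently every $u$ with $\eta_0(u)>1$ -- namely any literal, link node, shaft-$3$ vertex, the reservoir, or a node on the leftmost link path -- lies at graph distance at least $120n^2$ from $C_i$ in $G'$.

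The crucial estimate is an upper bound on $\xi(u)$ at these distant vertices. I would argue as follows. Every interior vertex of a connecting path has degree $2$ in $G'$, so each connecting path sits inside $G'$ as a genuine line graph. Applying Lemma~\ref{collection}(c) to the line $C_i, p_1, \dots, p_{120n^2-1}, x_j$ (and invoking Shang's tree generalization from Prop.\ 6 of \cite{Shang} to handle the ``star of three lines'' rooted at $C_i$) gives $\xi(x_j)\leq \tfrac{1}{120n^2+1}$ at every literal endpoint $x_j$ reached by a connecting path of $C_i$. A maximum-principle argument in the spirit of Lemma~\ref{SADmax} then propagates this bound across the literals: the mass at any vertex $u'$ lying beyond the literals is an iterated average of values at its neighbors, so it cannot exceed the largest value ever assumed at some literal, yielding $\xi(u')\leq \tfrac{1}{120n^2+1}$ uniformly for all vertices of positive water level.

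Granting this uniform bound, I would finish by a direct count: the weighted high-water total $\sum_{u:\,\eta_0(u)>1}(\eta_0(u)-1)$ in the comb is at most $15n + 9/2$ (literals contribute $\leq 6n\cdot 1$, link nodes $\leq (3n+1)\cdot 1$, shaft-$3$ vertices $(n+1)\cdot 2$, the reservoir $5/2$, and the leftmost link path $(4n-1)\cdot 1$, using $k\leq 3n$), so that
\[
\sum_u\xi(u)\bigl(\eta_0(u)-1\bigr)\;\leq\;\frac{15n+9/2}{120n^2+1}\;\leq\;\frac{1}{2n}
\]
for every $n\geq 1$. The principal obstacle is the mass bound on $G'$ itself, since the generalization of Lemma~\ref{collection}(c) to non-tree graphs is an open problem flagged at the end of Section~2; the plan's route around it crucially uses that the interior vertices of the connecting paths have degree $2$, turning each connecting path into a tree-like bottleneck to which the known line-graph and tree bounds apply, while the subsequent maximum-principle step handles what lies beyond the literals.
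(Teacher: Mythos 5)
The key step in your proposal --- deducing $\sup_k\xi_k(x_j)\leq\tfrac{1}{120n^2+1}$ at the literal endpoints from Lemma \ref{collection}(c) and Shang's tree extension --- is exactly where the argument breaks, and your closing caveat does not in fact repair it. The SAD process you consider runs on $G'$, which is \emph{not} a tree: although the interiors of $C_i$'s connecting paths have degree $2$, the literals at their far ends sit inside a web of short cycles (e.g.\ $x_i$ -- tooth endvertex -- $\overline{x_i}$ -- link node -- $x_i$, or $x_i$ -- link node -- $\overline{x_i}$ -- other link node -- $x_i$), and moreover other clause-nodes' connecting paths feed back to those same literals. Mass started at $C_i$ can therefore exit through one connecting path, circulate through these cycles, and re-enter a literal from a second direction. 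The bound $\sup_k\xi_k(w)\leq\tfrac{1}{d+1}$ is only established for line graphs and trees; that it holds on general graphs is precisely the open problem flagged after Lemma \ref{collection}. Applying the bound to a line or star \emph{subgraph} of $G'$ is not the same as bounding the SAD process on $G'$ itself, and the former does not control the latter. Your maximum-principle step is fine as far as it goes, but it only propagates a bound you have not yet established at the literals.

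What the paper does to close exactly this gap is build an explicit comparison tree (Figure \ref{pic_treecomp}) and couple the water-transport dynamics on the comb-graph to those on that tree, move by move, so that the water levels at $C_l$ and along its connecting paths are pointwise dominated by the corresponding levels in the tree; only then is the tree bound invoked --- legitimately, because the comparison object really is a tree. In that construction each of the three leaves is endowed with the \emph{entire} available excess water (bounded via (\ref{totalabove1}) by $20n-1$), which is why the paper's final estimate reads $1+3\cdot\tfrac{20n}{120n^2+1}$ with an extra factor of $3$ rather than your single weighted sum. Your closing arithmetic is fine, but without some such coupling the literal bound remains unjustified and the proof does not go through as written.
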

    
    \begin{proof}
    	The proof of this claim is a simple comparison with a tree similar to the structure above the node corresponding to
    	some clause $C_l$. Originating from $C_l$, there are at most 3 connecting paths that lead to three nodes representing
    	literals. Initially, the node corresponding to $C_l$ and the ones on the connecting paths are empty. Their water level
    	can be raised to almost $1$ using water from the teeth of the comb and further using nodes with initial water level
    	higher than $1$. The fact that opening pipes always produces convex combinations of the involved water levels (see
    	(\ref{convcomb})) guarantees that the total amount of water above a fixed level -- cumulated over all barrels -- is
    	non-increasing when pipes are opened. Initially, the cumulated amount of water above level $1$ in the whole graph is
    	\begin{equation}\label{totalabove1}
    		(\tfrac72-1)+3\,k\cdot(2-1)+(4\,n-1)\cdot(2-1)+(n+1)\cdot(3-1)\leq 15\,n+\tfrac72.
    	\end{equation}
        For $n\in\N$, this is clearly less than $20n-1$.
    	
    	We can mimick any attempt raising the water level at $C_l$ in the comb-graph via its connecting paths
    	in the tree depicted to the right in Figure \ref{pic_treecomp} in such a way, that the water levels at $C_l$ and on
    	its connecting paths are at	any point in time at most as high as the ones in the corresponding part of the comparison tree: 		\begin{figure}[ht]
    		\centering\hspace*{-0.25cm}
    		\includegraphics[scale=0.55]{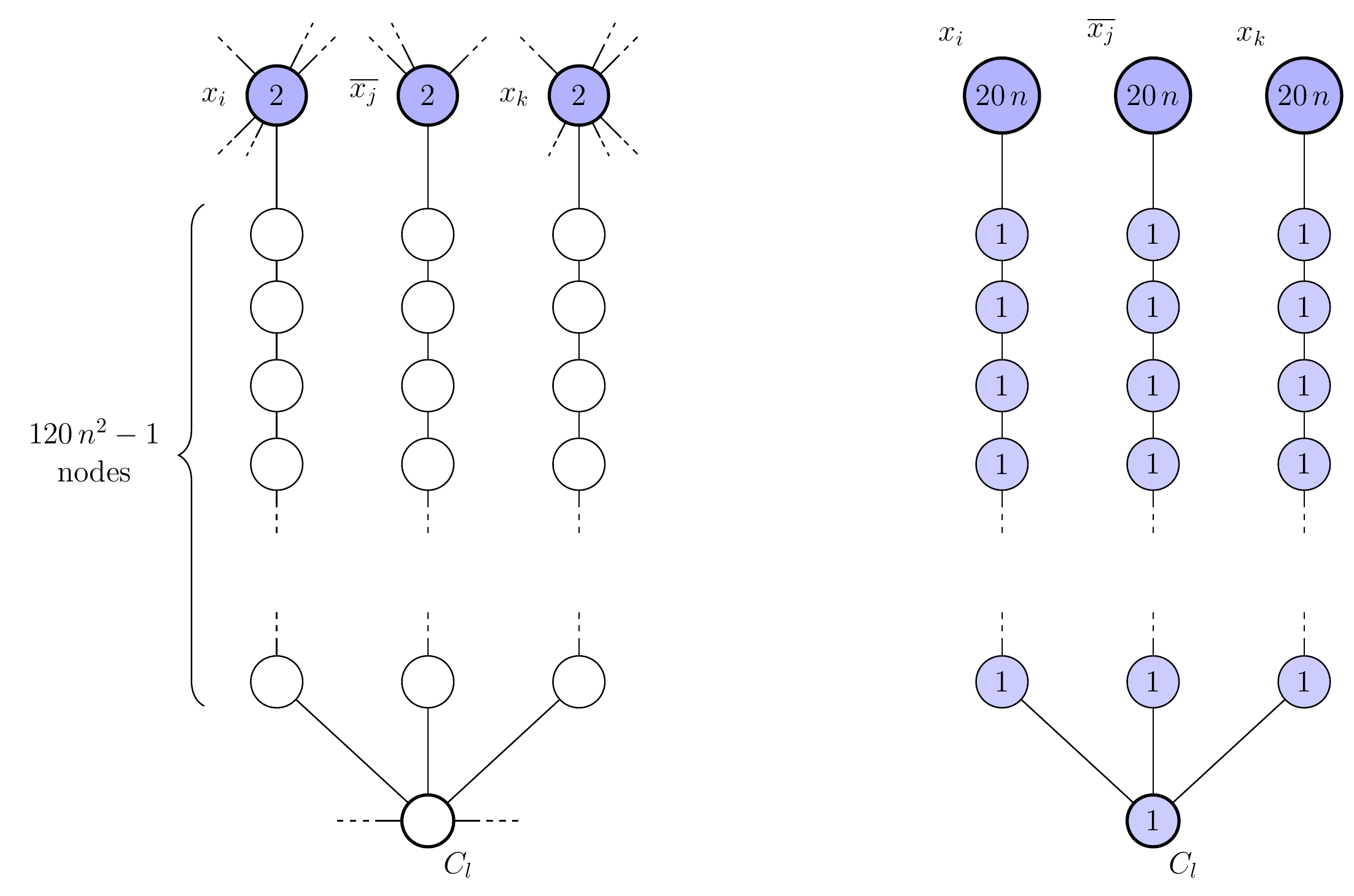}
    		\caption{Comparison of the structure above the node representing a clause $C_l$ in the comb-graph with an
    			appropriately tailored tree. \label{pic_treecomp}}
    	\end{figure}
    	If water is routed into the connecting paths above $C_l$ but water levels do not exceed $1$ (e.g.\ when routing water
    	down from the teeth) we do nothing in the comparison tree. If water from the vertices with initial water level above
    	$1$ is introduced into the connecting paths, we introduce the same amount to the corresponding connecting paths in the
    	tree (note that this is possible, as the total amount of water above level $1$ in the comb-graph is available in all three
    	leaves of the tree). Every move involving only nodes from the connecting paths depicted and $C_l$ is copied in the tree.
    	This retains the property that the water levels in the tree are not less than the ones in corresponding nodes of the
    	comb-graph and shows that the highest water level achievable at $C_l$ in the tree is an upper bound on the level achievable
    	in the comb-graph. If there are less than 3 connecting paths above $C_l$ in the comb-graph we can either modify the
    	comparison tree accordingly or just not use the extra branches.
    	
    	By the generalization of Thm.\ 2.3 in \cite{ShareDrink} to trees, see the comment after Lemma \ref{collection}, we know
    	that the contribution to the convex combination at $C_l$ from the leaves in the tree is at most $1$ divided by the graph
    	distance plus one, i.e.\ $\tfrac{1}{120\,n^2+1}$. The water level at $C_l$ in the tree can therefore not exceed
    	$$1+3\cdot\frac{20\,n}{120\,n^2+1}\leq 1+\frac{1}{2n},$$
    	which induces the claim.
    \end{proof}\vspace*{1em}
    
    Note that the same argument with only water to the amount of $5\,n-1$ above level $1$ available in the leaves would give the
    upper bound of $1+\tfrac{15\,n}{120\,n^2}=1+\tfrac{1}{8n}$, which will be used in the proof of Proposition \ref{decision}\,(b)
    as well.\vspace*{1em}
	
	\begin{nproof}{of Proposition \ref{decision}$\,\mathrm{(b)}$}
	To check that in case $F$ is not satisfiable we get $\kappa(v)\leq2$ is a bit more involved than the first part:
	Let us assume the contrary. Then there exists a finite move sequence (involving macro moves say) that achieves
	a final water level $\eta_T(v)>2$. By the idea in part (a) of Lemma \ref{simplif2} we can assume the last move
	to be the complete average over a connected vertex set $A$ including $v$. The only barrels with initial water
	level larger than $2$ are the ones left of each clause-node and of $v$ plus the reservoir. Including any
	node apart from these into the set $A$, when trying to achieve $\eta_T(v)>2$, can therefore only be beneficial if
	it is a bottleneck (see the discussion after Definition \ref{GLAdef}).
	
	Structurally speaking, there are three potential candidates for such a set $A$:
	\begin{itemize}
		\item a set containing some vertex from a connecting path \vspace*{-0.5em}
		\item a set containing only vertices from the bottom line graph or \vspace*{-0.5em}
		\item a set containing the reservoir vertex but no connecting path.
	\end{itemize}
	Note that the set we used in the case of satisfiable $F$ was of the third type. We will see in a moment that
	this is in fact the only relevant candidate for the set $A$ in the sense that the other two do not allow to raise
	the water level at $v$ above the value of $2$, even for a satisfiable formula $F$.
		
	The first candidate listed is ruled out rather easily: If $A$ contains a vertex from a connecting path, the bottleneck
	argument forces $A$ to contain the whole corresponding connecting path (recall: a bottleneck has to be a cut
	vertex between barrels with water levels above average and the target vertex). Then $A$ is of size at least
	$120\,n^2$ and the amount of water above the water level of $1$ is just not sufficient to fill up so many vertices
	to a level of two: From (\ref{totalabove1}) we know that the water available above level $1$ in the whole graph is
	at most $15\,n+\tfrac72$ initially and non-increasing. The amount in a whole connecting path with water level $2$
	would be $120\,n^2-1$, so this can definitively not be achieved.
	\vspace*{1em}
	
	Next, let us assume that $A$ is a subset of the vertices of the bottom line graph -- including vertex $v$ and $m$ clauses.
	Again by the bottleneck argument, we can assume that the leftmost node in $A$ is not a clause-node, i.e.\ has initial
	water level $3$ (see Figure \ref{setA}).
	\begin{figure}[H]
			\centering
			\includegraphics[scale=0.45]{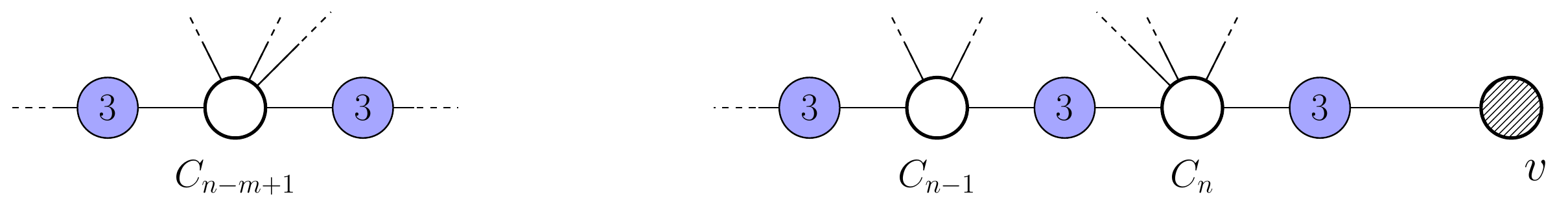}
			\caption{Vertices of the set $A$ considered in the second case. \label{setA}}
		\end{figure}
	With the intention to increase the total amount of water inside $A$ before the final averaging, one can try to fill
	up the clause-nodes. However, from Lemma \ref{treecomp} we know that the water level at the clause-nodes (being bottlenecks)
	in $A$ cannot be pushed much above the level of $1$, if accessed via connecting paths only. Further, this makes accessing
	vertex $v$ through a connecting path and $C_n$ unfavorable. Note that opening the pipes in the bottom line graph
	in order to connect barrels representing clauses inside $A$ to the ones with water level $2$ or $3$ outside $A$ might
	increase the amount of water in $A$ as well, but will raise the water level at the involved clause-nodes to a level
	that can not be further improved by using links	via connecting paths, so it is most beneficial to fill up the
	clause-nodes with water routed through connecting paths first.
	
	Let us assume that after this first phase, we managed to achieve a water level of $1+\tfrac{1}{2n}$ at $C_{n-m+1},\dots,C_n$.
	This might be technically impossible, but surely dominates the water levels achievable using the connecting paths
	only and simplifies our further considerations. Staying away from the connecting paths, water can only be routed into
	$A$ via $C_{n-m}$. If we average over all nodes in $A$ once while doing so, the final averaging is meaningless (because
	then the effect of any move between this and the last move will be increased if again all pipes inside $A$ are opened).
	However, since the last move has to involve $v$ we can assume that any move before leaves the pipe on the edge
	incident to $v$ closed -- and thus w.l.o.g.\ the pipe from $C_n$ towards $v$ as well.
	
	This in turn requires that the connected subset of nodes outside $A$ (incident to the leftmost node in $A$) that
	pools its water with a connected subset inside $A$, including $C_{n-m+1}$ but not $v$, has an average of at least
	$2+\tfrac{1}{4n}$, as the amount of water inside $A$ would decrease otherwise. In view of Lemma \ref{treecomp}, the
	only useful move is therefore to open the pipes along the shaft and through the nodes with initial water level $2$
	(which they actually might have lost during the first phase) in order to connect the vertex with water reservoir
	$\tfrac72$ to $A$. No matter which of the nodes representing literals we include, the water levels in the path connecting
	the leftmost link node to the shaft and the shaft itself will be dominated by the ones obtained if we pretend that
	the water above level $2$ from the reservoir can be transferred to the leftmost link node without any losses.
	
	Starting with a water level of $\tfrac72$ in the leftmost link node instead, we might increase the amount of
	water inside $A$ by at most another $\tfrac32\cdot\tfrac{2m}{2m+4n}\leq\tfrac12$ (as the path has to involve at
	least $4\,n$ nodes outside $A$, the subset inside $A$ is of size at most $2\,m\leq2\,n$ and already has an average
	of at least $2$). Along a line graph, the contribution of the water level from an endvertex to the ones formed as
	convex combinations along the line is decreasing with the graph distance (see Lemma \ref{collection}\,(b)).
	
	Despite our greatest efforts, the total amount of water in $A$ will consequently not exceed the value
	$3\,(m+1)+m\,(1+\tfrac{1}{2n})+\tfrac12\leq 4\,(m+1)$. Since $A$ consists of $2m+2$ vertices, leveling out
    across this set will possibly raise the amount of water in the barrel at $v$ to the level of $2$,
	but not beyond.
	\vspace*{1em}
	
	Finally, consider $A$ to contain all of the shaft as well as the vertex with water reservoir $\tfrac72$, but
	no vertex from a connecting path. Then $A$, being connected, has to contain the path that consists of $4\,n-1$ vertices,
	connecting the shaft to the leftmost link node, as well as a linking path through link nodes and vertices representing
	literals as described above. However, this time -- with $F$ being not satisfiable -- it is impossible to fill up
	all the clause-nodes to a level of about $1$, leaving at least one path between the reservoir and the leftmost link
	node unaffected: In order to reach all clause-nodes, we have to use both $x_i$ and $\overline{x_i}$ as water passage
	for at least one $i\in\{1,\dots,k\}$.
	
	In comparison to the case of satisfiable $F$, we will lose an amount of at least
	$1-\tfrac{1}{2n}$ for each clause-node that is not reached before the final averaging -- but likewise an amount
	of at least $1$ in the linking path for each pair $\{x_i,\overline{x_i}\}$ in which both nodes were used as water
	passage -- since their water level of $2$ reduces to something less than $1$ when water from the tooth above is
	routed through the node all the way down to a clause-node. By the same token as in Lemma \ref{treecomp},
	the clause-nodes can be filled up to a level of at most $1+\tfrac{1}{8n}$ through the connecting paths,
	as the water available outside $A$ above level $1$ is $k$ (from the literals not part of the linking path) plus $1$
	from a vertex representing a literal on the linking path if we need to route through such (and $k+1<5\,n-1$).
	Note that moving water from inside $A$ through a connecting	path to a clause will in fact reduce the amount of water
	in $A$. Consequently, the set $A$ (which is the same as the one chosen for satisfiable $F$) still contains $6n+2k+2$
	vertices, but the amount of water we can allocate in $A$ is at most
	\begin{eqnarray*}&&\tfrac72+(2\,k+4\,n-1)\cdot 2+(n+1)\cdot3+n\,(1+\tfrac{1}{8n})-(1-\tfrac{1}{2n})\\
	&=&12\,n+4\,k+\tfrac{29}{8}+\tfrac{1}{2n}\\
	&<& 12\,n+4\,k+4,
	\end{eqnarray*} for $n\geq2$.
	Thus, even in this manner we can not raise the water level at vertex $v$ to a level of $2$ or above if $F$ is
	not satisfiable which contradicts the above assumption and in consequence verifies $\kappa(v)\leq2$ for this case.
  \end{nproof}

  As already mentioned, this shows that solving the decision problem ``$\kappa(v)>2$ or $\kappa(v)\leq2$'' for the
  comb-like graph depicted in Figure \ref{reduction} solves the corresponding 3-SAT problem as well. Since 3-SAT is an
  NP-complete problem, we hereby established that any problem in NP can be polynomially reduced to a decision problem
  minor to the computation of $\kappa(v)$ in a suitable water transport instance -- showing that computing $\kappa(v)$
  in general is indeed an NP-hard problem.

\section{On infinite graphs}\label{infinite}

This last section is devoted to the water transport problem on infinite graphs. We consider an infinite, connected, simple
graph $G=(V,E)$ with bounded maximal degree. The initial water levels $\{\eta_0(u)\}_{u\in V}$ are considered to be i.i.d.\ 
with a (non-degenerate) common marginal distribution concentrated on $[0,C]$, for some $C>0$. The supremum $\kappa(v)$ of
achievable water levels at a fixed target vertex $v\in V$ depends on the initial water levels of course, which makes it a
random variable as well.

When the vertices of an infinite graph are assigned individual values, the most natural definition of a {\em global average}
across the graph is to look at a fixed sequence of nested subsets of the vertex set, with the property that every vertex
is included eventually, and then consider the limit of averages across those subsets (if it exists).

Given i.i.d.\ initial water levels, the strong law of large numbers tells us that the randomness of the global average
-- which is non-degenerate on finite graphs -- becomes degenerate if we consider infinite graphs, where it will a.s.\ equal
the expectation of the marginal distribution. $\kappa(v)$ however shows a slightly different behavior:
In order to determine whether the supremum of achievable water levels at a given vertex $v$ is a.s.\ 
constant or not, we have to investigate the global structure of the infinite graph a bit more closely.

If the graph contains a half-line with sufficiently many extra vertices attached to it, the distribution of $\kappa(v)$ 
becomes degenerate for all $v\in V$ -- as stated in Theorem \ref{inf} and the final remark: One can in fact, with
probability $1$, push the water level at $v$ to the essential supremum of the marginal distribution. The infinite line
graph however is too lean to feature such a substructure and behaves therefore much more like a finite graph, in the sense
that the distribution of $\kappa(v)$ is non-degenerate -- see Theorem \ref{qtgraphs}. In order to evolve these two main
results of this section, let us first properly define what we mean by ``sufficiently many extra vertices''.

\begin{definition}\label{hl}
Let $G=(V,E)$ be an infinite connected simple graph. It is said to contain a {\em neighbor-rich half-line}, if there
exists a subgraph of $G$ consisting of a half-line
$$H=\big(\{v_k,\;k\in\N\},\{\langle v_k, v_{k+1}\rangle,\;k\in\N\}\big)$$
and distinct vertices $\{u_k,\;k\in\N\}$ from $V\setminus\{v_k,\;k\in\N\}$ such that there is an
injective function $f:\N\to\N$ with the following two properties (cf.\ Figure \ref{half_line}):
\begin{enumerate}[(i)]
\item For all $k\in\N$: $\langle u_k, v_{f(k)}\rangle\in E$, i.e.\ the vertices $u_k$ and $v_{f(k)}$ are
neighbors in $G$.
\item The function $f$ is growing slowly enough in the sense that $\sum_{k=1}^\infty\frac{1}{f(k)}$ diverges.
\end{enumerate}
\end{definition}

\begin{figure}[H]
	\centering
	\includegraphics[scale=0.88]{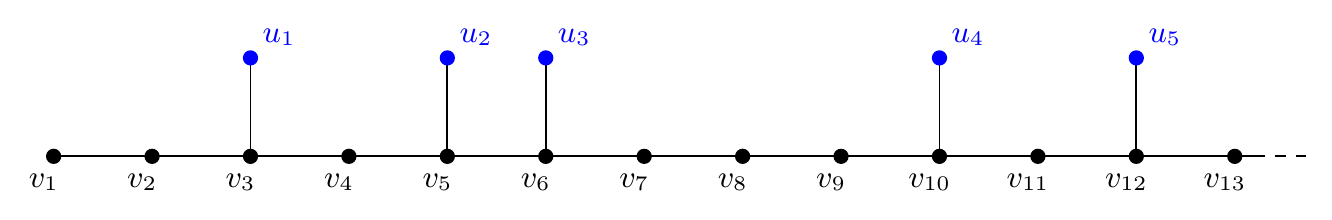}
	\caption{The beginning part of a neighbor-rich half-line.\label{half_line}}
\end{figure}

Note that -- by a renumbering of $\{u_k, \;k\in\N\}$ -- we can always assume the function $f$ to be (strictly)
increasing. Furthermore, if $G$ is connected and contains a neighbor-rich half-line, we can choose any vertex
$v\in V$ to be its beginning vertex: If $v_l$ is the vertex with highest index at shortest distance to $v$ in
$H$, replace $(v_1,\dots,v_l)$ by a shortest path from $v$ to $v_l$ in $H$. The altered half-line will still
be neighbor-rich, since for all $M,N\in\N$ and $f$ as above:
$$\sum_{k=1}^\infty\frac{1}{f(k)}=\infty\quad\Longleftrightarrow\quad\sum_{k=M}^\infty\frac{1}{f(k)+N}=\infty.$$

\noindent
With this notion in hand, we can state and prove the following result:

\begin{theorem}\label{inf}
Consider an infinite (connected) graph $G=(V,E)$ and the initial water levels to be i.i.d.\ $\text{\upshape
unif}([0,1])$. Let $v\in V$ be a fixed vertex of the graph.
If $G$ contains a neighbor-rich half-line, then $\kappa(v)=1$ almost surely.
\end{theorem}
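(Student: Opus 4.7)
The plan is to invoke the duality of Lemma \ref{dual}: to prove $\kappa(v)\geq 1-\epsilon$ almost surely for every $\epsilon>0$, it suffices to exhibit an infinite type optimal move sequence whose dual SAD profiles eventually place almost all of their mass on pendants $u_k$ with $\eta_0(u_k)>1-\epsilon$. By the renumbering remark after Definition \ref{hl}, we may assume $v=v_1$ and that $f$ is strictly increasing. Fix $\epsilon>0$ and set $K_\epsilon:=\{k\in\N:\eta_0(u_k)>1-\epsilon\}$. The indicators $\mathbbm{1}[k\in K_\epsilon]$ are iid Bernoulli$(\epsilon)$, and since $f(k)\geq k$ gives $\sum_k 1/f(k)^2<\infty$ while $\sum_k 1/f(k)=\infty$ by hypothesis, Kolmogorov's three-series theorem applied to the independent nonnegative summands $\mathbbm{1}[k\in K_\epsilon]/f(k)$ yields
\[
\sum_{k\in K_\epsilon}\frac{1}{f(k)}\;=\;\infty \quad\text{almost surely.}
\]

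The construction proceeds in stages along the doubling scale $N_j=2^j$. At stage $j$, starting from the current SAD profile: (a) carry out a macro move on $A_j=\{v_1,\ldots,v_{N_j}\}$ via the half-line edges, uniformizing the mass currently sitting on the half-line; (b) for each $k\in K_\epsilon$ with $f(k)\in(N_{j-1},N_j]$, perform a single pairwise averaging on the pendant edge $\langle v_{f(k)},u_k\rangle$, trapping half of the mass at $v_{f(k)}$ inside $u_k$ forever (no future move will touch $u_k$, since $f$ is injective and subsequent macro moves do not include pendants). Letting $\theta_j$ denote the total SAD mass remaining on the half-line after stage $j$, direct bookkeeping gives the recursion $\theta_j=\theta_{j-1}(1-m_j/(2N_j))$ with $m_j=|\{k\in K_\epsilon:f(k)\in(N_{j-1},N_j]\}|$. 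For each $k$ contributing to $m_j$ one has $f(k)>N_{j-1}=N_j/2$, hence $1/N_j\geq 1/(2f(k))$ and
\[
\sum_j\frac{m_j}{2N_j}\;\geq\;\frac{1}{4}\sum_{k\in K_\epsilon}\frac{1}{f(k)}\;=\;\infty\quad\text{a.s.,}
\]
which forces the telescoping product $\theta_j\to 0$ almost surely.

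Since each pendant $u_k$ is touched at most once, the profiles $\xi^{(j)}$ converge pointwise to a limit supported, up to the vanishing residue $\theta_j$, on $\{u_k:k\in K_\epsilon\}$, and Lemma \ref{dual} turns the corresponding move sequences into an infinite type optimal move sequence realizing $\eta(v_1)\geq(1-\epsilon)(1-\theta_j)\to 1-\epsilon$. As $\epsilon>0$ was arbitrary and $\kappa(v_1)\leq 1$ trivially, this yields $\kappa(v_1)=1$ a.s. The principal technical obstacle is the mass accounting in the middle paragraph: one must verify that the interleaved macro moves and pendant pairings do behave as advertised, and that the divergence $\sum 1/f(k)=\infty$ translates, through the geometric scales $N_j$, into the convergence $\theta_j\to 0$; this is exactly where the neighbor-rich hypothesis on the half-line is consumed.
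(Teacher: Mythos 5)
Your proof is correct and takes essentially the same approach as the paper: route SAD mass from $v$ out onto the high-level pendants along the half-line via the duality of Lemma~\ref{dual}, and use the almost-sure divergence of $\sum_{k\in K_\epsilon}1/f(k)$ to drive the residual mass on the half-line to zero through a telescoping product bounded by $\exp(-\sum\cdot)$. The paper processes the good pendants one at a time (pooling along $(v_1,\dots,v_{f(N_l)},u_{N_l})$ and proving the divergence via a small martingale lemma), whereas you batch them on doubling scales and invoke the three-series theorem; these are cosmetic variants of the same estimate.
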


Before embarking on the proof of this theorem, we are going to show a standard auxiliary result which will be
needed in the proof:

\begin{lemma}\label{diverg}
For $\epsilon>0$, let $(Y_k)_{k\in\N}$ be an i.i.d.\ sequence having Bernoulli distribution with parameter
$\epsilon$. If the function $f:\N\to\N$ is strictly increasing and such that $\sum_{k=1}^\infty\frac{1}{f(k)}$
diverges, then
$$\sum_{k=1}^\infty \frac{Y_k}{f(k)}=\infty\quad\text{almost surely.}$$
\end{lemma}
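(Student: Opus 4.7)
The plan is to recognize this as a standard application of Kolmogorov's theorem on convergence of series of independent random variables, applied to the centered summands $X_k - \mathbb{E}\, X_k$, where $X_k := Y_k/f(k)$. The idea is to separate the series into a divergent deterministic mean part and a fluctuation part that converges almost surely.

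First I would note that $X_k$ are independent, non-negative, bounded by $1/f(k)$, with $\mathbb{E}\, X_k = \epsilon/f(k)$ and
\[
\text{Var}(X_k) = \frac{\epsilon(1-\epsilon)}{f(k)^2} \leq \frac{1}{f(k)^2}.
\]
Because $f:\N\to\N$ is strictly increasing, a trivial induction gives $f(k) \geq k$, so $\sum_{k=1}^\infty \text{Var}(X_k) \leq \sum_{k=1}^\infty 1/k^2 < \infty$.

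Next, I would invoke Kolmogorov's convergence theorem: if $(Z_k)$ are independent, centered random variables with $\sum_k \text{Var}(Z_k) < \infty$, then $\sum_k Z_k$ converges almost surely to a finite limit. Applying this to $Z_k = X_k - \epsilon/f(k)$ yields that $\sum_{k=1}^\infty \bigl(X_k - \epsilon/f(k)\bigr)$ converges a.s. to some finite random variable $S$. Writing
\[
\sum_{k=1}^n \frac{Y_k}{f(k)} = \sum_{k=1}^n \Bigl(X_k - \frac{\epsilon}{f(k)}\Bigr) + \epsilon \sum_{k=1}^n \frac{1}{f(k)},
\]
letting $n\to\infty$ and using the divergence assumption $\sum_k 1/f(k) = \infty$ shows that the left-hand side diverges to $+\infty$ almost surely, as required.

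There is no real obstacle here; the only mild point worth noting is the observation $f(k)\geq k$, which comes for free from $f$ being strictly increasing with integer values and is exactly what makes the variances summable. An alternative (essentially equivalent) argument would just invoke Kolmogorov's strong law for independent summands to conclude $S_n/\mathbb{E}\, S_n \to 1$ a.s., but the centered-series viewpoint above seems the most economical.
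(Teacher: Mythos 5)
Your proof is correct and follows essentially the same route as the paper: both decompose $\sum Y_k/f(k)$ into a centered fluctuation part plus a divergent mean part, observe $f(k)\geq k$ to get summable variances, and conclude that the centered series converges almost surely (the paper phrases this via the martingale $L^2$-convergence theorem, you via Kolmogorov's one-series theorem, but these are the same statement here).
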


\begin{proof}
Let us define
$$X_n=\sum_{k=1}^n \frac{Y_k-\epsilon}{f(k)}\quad \text{for all }n\in\N.$$
As the increments are independent and centered, this defines a martingale with respect to the natural filtration.
Furthermore,
$$\E(X_n^2)=\sum_{k=1}^n \frac{\E(Y_k-\epsilon)^2}{f(k)^2}=(\epsilon-\epsilon^2)\cdot\sum_{k=1}^n\frac{1}{f(k)^2}
\leq\epsilon\,\frac{\pi^2}{6}.$$
By the $L^p$-convergence theorem (see for instance Thm.\ 5.4.5 in \cite{Durrett}) there exists a random variable $X$
such that $X_n$ converges to $X$ almost surely and in $L^2$. Having finite variance, $X$ must be a.s.\ real-valued
and due to
$$\sum_{k=1}^n \frac{Y_k}{f(k)}-X_n=\epsilon\cdot\sum_{k=1}^n \frac{1}{f(k)},$$
the divergence of $\sum_{k=1}^\infty\frac{1}{f(k)}$ forces
$\sum_{k=1}^\infty \frac{Y_k}{f(k)}=\infty$ almost surely.
\end{proof}\vspace*{1em}

\begin{nproof}{of Theorem \ref{inf}}	
Given a graph $G$ with the properties stated and a vertex $v$, we can choose a neighbor-rich half-line $H$ with
$v=v_1$ and the set of extra neighbors $\{u_n{}\}_{n\in\N}$ as described in and after Definition \ref{hl}.
The initial water levels at $\{u_n\}_{n\in\N}$ are i.i.d.\ $\text{\upshape unif}([0,1])$, of course.

Depending on the random initial profile, let us define the following SAD-procedure starting at $v$:
Fix $\epsilon,\delta>0$ and let $\{N_l\}_{l\in\N}$ be the increasing (random) sequence of indices chosen such that the initial
water level at $u_{N_l}$ is at least $1-\epsilon$ for all $l$. Then define the SAD-procedure --
starting with $\xi_0(v)=1,\ \xi_0(u)=0$ for all $u\in V\setminus\{v\}$ -- such that
first all vertices along the line $(v_1,v_2,\dots,v_{f(N_1)},u_{N_1})$ exchange liquids sufficiently often to get
$$\xi_{k_1}(u_{N_1})\geq\frac{1}{f(N_1)+2}\quad\text{for some }k_1>0,$$
and never touch $u_{N_1}$ again. Note that by Lemma \ref{evenout}, $\xi_{k}(u_{N_1})$ can be pushed
as close to $\tfrac{1}{f(N_1)+1}$ as desired in this way.
At time $k_1$, the joint amount of water in the glasses at $v_1,v_2,\dots,v_{f(N_1)}$ equals $1-\xi_{k_1}(u_{N_1})$ and
we will repeat the same procedure along $(v_1,v_2,\dots,v_{f(N_2)},u_{N_2})$ to get
$$\xi_{k_2}(u_{N_2})\geq\frac{1}{f(N_2)+2}\cdot\big(1-\xi_{k_1}(u_{N_1})\big)\quad\text{for some }k_2>k_1$$
and iterate this.

After $m$ iterations of this kind, the joint amount of water localized at vertices of the half-line $H$ equals
$1-\sum_{l=1}^m \xi_{k_l}(u_{N_l})$, which using $1-x\leq\text{e}^{-x}$ can be bounded from above as follows:
\begin{align}\label{uppbound}\begin{split}
1-\sum_{l=1}^m \xi_{k_l}(u_{N_l})&\leq\prod_{l=1}^m \bigg(1-\frac{1}{f(N_l)+2}\bigg)\\
                               &\leq \exp\bigg(-\sum_{l=1}^m\frac{1}{f(N_l)+2}\bigg).\end{split}
\end{align}
Defining $Y_k:=\mathbbm{1}_{\{\eta_0(u_k)\geq 1-\epsilon\}}$ for all $k\in\N$ we get $(Y_k)_{k\in\N}$ i.i.d.\ 
$\text{Ber}(\epsilon)$ and can rewrite the limit of the sum in the exponent as follows:
$$\sum_{l=1}^\infty\frac{1}{f(N_l)+2}=\sum_{k=1}^\infty\frac{Y_k}{f(k)+2}.$$
This allows us to conclude from Lemma \ref{diverg} that the exponent in (\ref{uppbound}) tends a.s.\ to $-\infty$
as $m \to \infty$. Consequently, $m,T\in \N$ can be chosen large enough such that with probability $1-\delta$ it holds that
$$\sum_{l=1}^m \xi_{k_l}(u_{N_l})\geq 1-\epsilon\quad\text{and}\quad k_m\leq T.$$

Given this event, the move sequence corresponding to the SAD-procedure just described -- adding no further
updates after time $k_m$, i.e.\ $\mu_k=0$ for $k> k_m$, if $k_m<T$ -- then ensures
(see Lemma \ref{dual}) that
$$\eta_T(v)\geq\sum_{l=1}^m\xi_{T}(u_{N_l})\,\eta_0(u_{N_l})\geq(1-\epsilon)^2,$$
forcing $\kappa(v)\geq(1-\epsilon)^2$ with probability at least $1-\delta$. Since $\delta>0$ was arbitrary, this implies
$\kappa(v)\geq(1-\epsilon)^2$ a.s.\ and letting $\epsilon$ go to $0$ then establishes the claim.
\end{nproof}

Let us now take a look at how this result can be used to crystallize the outstanding leanness of the infinite line
among all infinite quasi-transitive graphs. To this end, let us first repeat the definition of quasi-transitivity.

\begin{definition}\label{quasitrans}
Let $G=(V,E)$ be a simple graph. A bijection $f:V\to V$ with the property that $\langle f(u),f(v)\rangle\in E$
if and only if $\langle u,v\rangle\in E$ is called a {\em graph automorphism}. $G$ is said to be {\em (vertex-)
transitive} if for any two vertices $u,v\in V$ there exists a graph automorphism $f$ that maps $u$ on $v$, i.e.\
$f(u)=v$.

If the vertex set $V$ can be partitioned into finitely many classes such that for any two vertices $u,v$
belonging to the same class there exists a graph automorphism that maps $u$ on $v$, the graph $G$ is called
{\em quasi-transitive}.
\end{definition}

Note that the notion of quasi-transitivity becomes meaningful only for infinite graphs as all finite graphs
are quasi-transitive by definition.

\begin{theorem}\label{qtgraphs}
Consider an infinite (connected) quasi-transitive graph $G=(V,E)$ and the initial water levels to be i.i.d.\
$\text{\upshape unif}([0,1])$. Let $v\in V$ be a fixed vertex of the graph.
If $G$ is the line graph, that is $V=\Z$ and $E=\{\langle u, u+1\rangle,\;u\in\Z\}$, then $\kappa(v)$
depends on the initial profile.
If $G$ is not the line graph, then $\kappa(v)=1$ almost surely.
\end{theorem}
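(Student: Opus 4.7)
The plan is to handle the two statements of the theorem separately: the non-line quasi-transitive case reduces to Theorem \ref{inf}, while the line graph case is settled by a structural upper bound combined with a large-deviation estimate.

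For the non-line case, the goal is to show that any infinite connected quasi-transitive graph $G \neq \Z$ contains a neighbor-rich half-line starting at the prescribed vertex $v$, so that Theorem \ref{inf} delivers $\kappa(v) = 1$ almost surely. Quasi-transitivity guarantees bounded maximal degree and finitely many vertex-orbits under the automorphism group. If $G$ had maximal degree at most $2$, connectedness, infinitude and quasi-transitivity would force $G = \Z$ (finite cycles and half-lines being excluded); hence $G$ possesses a vertex of degree $\geq 3$, and by quasi-transitivity its entire orbit consists of such vertices and occurs with positive density throughout $G$. Constructing a half-line $H = (v_1 = v, v_2, \ldots)$ via K\"onig's lemma and then greedily selecting distinct external neighbors $u_k \notin H$ of its degree-$\geq 3$ vertices (using bounded maximal degree to avoid matching collisions) delivers $f(k) = \mathcal{O}(k)$, hence $\sum 1/f(k) = \infty$.

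For the line graph case, it suffices to show $\Prob(\kappa(v) < 1) > 0$, because $\kappa(v) \in [\eta_0(v), 1]$ and $\eta_0(v) \sim \text{unif}([0,1])$, so any almost-surely constant value of $\kappa(v)$ would have to equal $1$. By approximating $\Z$ with finite sublines and extending Example \ref{linegraph3} (via Lemmas \ref{dual} and \ref{collection}), every limiting SAD profile $\xi$ dual to an optimal move sequence on $\Z$ is unimodal, satisfies $\xi(w) \leq 1/(d(v,w)+1)$, and is piecewise constant on two adjacent integer intervals $[l, q-1]$ and $[q, r]$ with $l \leq v \leq q \leq r$. Consequently,
$$\kappa(v) \,\leq\, \sup_{l \leq v \leq q \leq r} \Big( \tfrac{q-v}{r-v+1}\,\overline{\eta}_{[l,q-1]} + \tfrac{r-q+1}{r-v+1}\,\overline{\eta}_{[q,r]} \Big) \,\vee\, (\text{mirror case}),$$
where $\overline{\eta}_I$ denotes the average of $\eta_0$ over the interval $I$. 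A Hoeffding bound gives $\Prob(\overline{\eta}_I \geq 1-\epsilon) \leq e^{-c(\epsilon)|I|}$, and summing over the relevant intervals of each length (for which the constraints $l \leq v \leq q \leq r$ together with $\xi(q-1) < \xi(q) = 1/(r-v+1)$ enforce a polynomial count) is summable in the length; Borel--Cantelli yields that almost surely only finitely many configurations achieve average close to $1$, forcing $\kappa(v)$ strictly below $1$ almost surely.

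The principal obstacle lies in the non-line case: ensuring that the external neighbors $u_k$ truly lie off $H$, rather than being absorbed as chords of $H$. In pathological examples such as $\Z$ augmented with skip edges $\langle k, k+2\rangle$, a naively chosen geodesic half-line exhibits no external branching, and one must re-route $H$ through the chords so that the skipped vertices become the required external $u_k$'s -- quasi-transitivity together with $G \neq \Z$ guarantees this reshaping is always possible. A secondary subtlety on the line graph side is justifying that the two-interval characterization survives the passage to an infinite-support limit on $\Z$; this follows from compactness of the set of achievable SAD profiles in $[0,1]^{\Z}$ combined with the tightness imposed by the decay $\xi(w) \leq 1/(d(v,w)+1)$.
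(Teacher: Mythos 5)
Your proposal correctly splits the theorem into the two cases and reduces the non-line case to Theorem~\ref{inf} by constructing a neighbor-rich half-line; this is exactly the paper's strategy, and your degree argument (max degree $\leq 2$ plus connectedness, infinitude and quasi-transitivity forces $G = \Z$) is the right starting point. However, your stated ``principal obstacle'' --- that the external neighbors $u_k$ might be absorbed as chords of $H$ --- is in fact not an obstacle at all once $H$ is chosen to be a \emph{geodesic} half-line (i.e.\ $d(v,v_k)=k-1$), which the paper does. A geodesic half-line cannot have chords: if $\langle v_j, v_k\rangle\in E$ with $j<k-1$, then $d(v,v_k)\leq j < k-1$, a contradiction. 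Likewise any neighbor of $v_{f(k)}$ other than $v_{f(k)\pm 1}$ lies at distance $f(k)-2$, $f(k)-1$, or $f(k)$ from $v$, and the only $H$-vertices at those distances are $v_{f(k)-1}$, $v_{f(k)}$, $v_{f(k)+1}$ themselves, so the extra neighbor is automatically off $H$. Your example of $\Z$ with skip edges $\langle k,k+2\rangle$ is misleading: there the path $0,1,2,\ldots$ is not geodesic (indeed $d(0,2)=1$), and the true geodesic half-lines, such as $0,2,4,\ldots$, have the odd vertices as external neighbors --- no ``re-routing'' is needed. You should also tighten ``positive density'' into a quantitative bound; the paper achieves this cleanly by noting that $d(u)$, the distance from $u$ to the nearest degree-$\geq 3$ vertex, is automorphism-invariant and therefore takes finitely many values on a quasi-transitive graph, hence is bounded by some $C$, yielding $f(k)\leq k(2C+3)$ and the required spacing $\geq 3$ so that the $u_k$ are distinct.

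For the line graph case you take a genuinely different route from the paper, which invokes La.~4.3 and 6.3 of \cite{ShareDrink} (positive probability of $v$ being two-sidedly $\epsilon$-flat, in which case all future levels at $v$ stay within $[\tfrac12-6\epsilon,\tfrac12+6\epsilon]$). Your plan --- extend the two-piece SAD characterization of Example~\ref{linegraph3} to $\Z$ and then use concentration plus Borel--Cantelli to conclude $\kappa(v)<1$ a.s.\ --- is a plausible and potentially stronger alternative, and your reduction ``degenerate $\Rightarrow$ a.s.\ value is $1$'' via $\kappa(v)\geq\eta_0(v)$ is sound. But the union-bound step as sketched has a real gap: when $q-v$ is small compared to $r-q+1$, the implied constraint on $\overline{\eta}_{[l,q-1]}$ becomes vacuous, so the probability bound for a single tuple depends only on $[q,r]$ and not on $l$; summing that constant bound over the infinitely many admissible $l\leq v$ then diverges. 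One has to treat the union over $l$ (for fixed $q,r$) as a single event rather than summing, and then argue separately about the regime where both intervals carry non-negligible weight. This is fixable, but your sketch elides it, and the combinatorics you describe (``polynomial count of tuples per length'') does not by itself yield summability.
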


\begin{proof}
	Given i.i.d.\ $\text{\upshape unif}([0,1])$ initial water levels, we can immediately conclude two things:
	If $G$ is an infinite (connected) graph, the strong law of large numbers guarantees $\kappa(v)\geq\tfrac12$
	almost surely.
	
	If $G$ is the infinite line graph, there is a positive probability that the vertex $v$ is what Häggström \cite{ShareDrink}
	calls two-sidedly $\epsilon$-flat with respect to the initial profile (see La.\ 4.3 in \cite{ShareDrink}), i.e.
	\begin{equation}\label{tflat}
	\frac{1}{m+n+1}\sum_{u=v-m}^{v+n}\eta_0(u)\in\left[\tfrac12-\epsilon,\tfrac12+\epsilon\right]\quad\text{for all }m,n\in\N_0.
	\end{equation}
	
	La.\ 6.3 in \cite{ShareDrink} states that in this situation, the water level at $v$ is bound to stay within the interval
	$[\tfrac12-6\epsilon,\tfrac12+6\epsilon]$ irrespectively of future updates. Together with the simple observation
	$\kappa(v)\geq\eta_0(v)$, it implies that $\kappa(v)$ is a random variable with non-degenerate distribution
	on $[\tfrac12,1]$.
	
	In view of Theorem \ref{inf}, to prove the second part, we only have to verify, that an infinite, connected,
	quasi-transitive graph that is not the line graph contains a neighbor-rich half-line. Since $G$ is infinite (and by our
	general assumptions both connected and having finite maximal degree) a compactness argument guarantees the existence of
	a half-line $H$ on the vertices $\{v_k,\;k\in\N\}$ such that $v_1=v$ and the graph distance from $v_k$ to $v$ is $k-1$ for
    all $k$.
	
	Let us consider the function $d:V\to\N_0$, where $d(u)$ is the graph distance from the node $u$ to a vertex of degree
	at least 3 being closest to it.
	Since $G$ is quasi-transitive, connected and not the line graph, $d$ is finite and can take on only
	finitely many values, which is why it has to be bounded, by $C\in \N$ say. Consequently, $G$ can not contain stretches of
	more than $2C$ linked vertices of degree 2. For this reason, there must be a vertex among $v_3,\dots,v_{2C+3}$,
	say $v_{f(1)}$, having a neighbor $u_1$ outside of $H$. In the same way, we can find a vertex $u_2$ outside $H$
	having a neighbor $v_{f(2)}$ among $v_{2C+6},\dots,v_{4C+6}$ and in general some $u_k$ not part of
	$H$ but linked to a vertex $v_{f(k)}\in\{v_k,\;k\in\N\}$ with $$(k-1)\,(2C+3)+3\leq f(k)\leq k\,(2C+3)\quad\text{for all }k\in\N.$$
	This choice makes sure that $v_{f(j)}$ and $v_{f(k)}$ are at graph distance at least 3 for $j\neq k$, which forces the
	set $\{u_k,\;k\in\N\}$ to consist of distinct vertices.	Due to
	$$\sum_{k=1}^\infty\frac{1}{f(k)}\geq\frac{1}{2C+3}\,\sum_{k=1}^\infty\frac{1}{k}=\infty,$$
	$H$ is a neighbor-rich half-line in the sense of Definition \ref{hl} as desired.
\end{proof}

\begin{remark}
\begin{enumerate}[(a)]
\item
Note that the essential property of the initial water levels, needed in the proof of Theorem \ref{inf}, was
independence. The argument can immediately be generalized to the situation where the initial water levels are independently (but not
necessarily identically) distributed on $[0,C]$ and we have some weak form of uniformity, namely:

For every $\delta>0$, there exists some $\epsilon>0$ such that
for all $v\in V$: $$\Prob\big(\eta_0(v)>C-\delta\big)\geq \epsilon.$$
The sequence $Y_k:=\mathbbm{1}_{\{\eta_0(u_k)\geq C-\delta\}},\ k\in\N,$ similar to the one defined in the proof of
Theorem \ref{inf} will no longer be i.i.d.\ $\text{Ber}(\epsilon)$, but an appropriate coupling will ensure that
$$\sum_{k=1}^\infty \frac{Y_k}{k}\geq\sum_{k=1}^\infty \frac{Z_k}{k}$$
almost surely, where $(Z_k)_{k\in\N}$ is an i.i.d.\ sequence of $\text{Ber}(\epsilon)$ random variables. Accordingly,
we get $\kappa(v)=C$ a.s.\ even in this generalized setting.
\item
As alluded to in the introduction, the statement of Theorem \ref{qtgraphs} can be interpreted in the following way:
When it comes to the qualitative behavior of $\kappa(v)$ for a fixed vertex $v$ in the graph, the radical change does
not happen between finite and infinite graphs but rather between the line graph $\Z$ and all other quasi-transitive
infinite graphs, which is why the results for the Deffuant model on $\Z$ can not immediately be transferred to
higher-dimensional grids -- as discussed in the introduction of Sect.\ 3 in \cite{Deffuant}.
\item
Finally, it is worth emphasizing that Theorem \ref{qtgraphs} does not capture the full statement of Theorem \ref{inf}:
If we take the infinite line graph $\Z$ and add an extra neighbor to every node that corresponds to a prime number,
the only quasi-transitive subgraph contained is the line graph itself. However, since it contains a neighbor-rich
half-line, Theorem \ref{inf} states that $\kappa(v)=1$ for i.i.d.\ $\text{\upshape unif}([0,1])$ initial water levels
and any target vertex $v$.
\end{enumerate}
\end{remark}



\vspace{0.5cm}
\makebox[\textwidth][c]{
\begin{minipage}[t]{1.2\textwidth}
\begingroup
	\begin{minipage}[t]{0.5\textwidth}
	{\sc \small Olle Häggström\\
   Department of Mathematical Sciences,\\
   Chalmers University of Technology,\\
   412 96 Gothenburg, Sweden.}\\
   olleh@chalmers.se
	\end{minipage}
	\hfill
	\begin{minipage}[t]{0.5\textwidth}
	{\sc \small Timo Hirscher\\
   Department of Mathematical Sciences,\\
   Chalmers University of Technology,\\
   412 96 Gothenburg, Sweden.}\\
   hirscher@chalmers.se\\
	\end{minipage}
	\endgroup
\end{minipage}}


\begin{thebibliography}{9}
    \bibitem{3SAT}
         {\sc Cook, S.A.,}
         {The Complexity of Theorem-Proving Procedures,}
         {\em Proceedings of the 3rd Annual ACM Symposium on Theory of Computing},
         pp.\ 151-158, 1971.
    \bibitem{GLA1}
         {\sc Cox, J.T.,} {\sc Gandolfi, A.,} {\sc Griffin, P.S.} and {\sc Kesten, H.,}
         {Greedy lattice animals I: upper bounds,}
         {\em Annals of Applied Probability}, 
         Vol.\ 3 (4), pp.\ 1151-1169, 1993.
    \bibitem{Durrett}
         {\sc Durrett, R.,}
         {\em ``Probability: Theory and Examples (4th edition)''},
         Cambridge University Press, 2010.
    \bibitem{ShareDrink}
         {\sc Häggström, O.,}
         {A pairwise averaging procedure with application to consensus formation in the Deffuant model,}
         {\em Acta Applicandae Mathematicae},
         Vol.\ 119 (1), pp.\ 185-201, 2012.
    \bibitem{Deffuant}
         {\sc Häggström, O.} and {\sc Hirscher, T.,}
         {Further results on consensus formation in the Deffuant model,}
         {\em Electronic Journal of Probability},
         Vol.\ 19, 2014.
    \bibitem{GLA2}
         {\sc Lee, S.,}
         {An inequality for greedy lattice animals,}
         {\em Annals of Applied Probability}, 
         Vol.\ 3 (4), pp.\ 1170-1188, 1993.
    \bibitem{Shang}
         {\sc Shang, Y.,}
         {Deffuant model with general opinion distributions: First impression and critical confidence bound,}
         {\em Complexity},
         Vol.\ 19 (2), pp.\ 38-49, 2013.
 \end{thebibliography}
\end{document}